\newtheorem{theorem}{Theorem}[section]
\newtheorem{lemma}[theorem]{Lemma}
\newtheorem{prop}[theorem]{Proposition}
\newtheorem{definition}[theorem]{Definition}
\newtheorem{remark}[theorem]{Remark}
\renewcommand{\l}{\lambda}
\newcommand{\R}{\mathbb{R}}
\begin{document}
\title[Bilinear estimates with applications to Boussinesq equations]{Revisited bilinear Schr\"{o}dinger estimates with applications to generalized Boussinesq equations}

\author{Dan-Andrei Geba and Evan Witz}

\address{Department of Mathematics, University of Rochester, Rochester, NY 14627, U.S.A.}
\email{dangeba@math.rochester.edu}
\address{Department of Mathematics, University of Rochester, Rochester, NY 14627, U.S.A.}
\email{ewitz@ur.rochester.edu}
\date{}

\begin{abstract}
In this paper, our goal is to improve the local well-posedness theory for certain generalized Boussinesq equations by revisiting bilinear estimates related to the Schr\"{o}dinger equation. Moreover, we propose a novel, automated procedure to handle the summation argument for these bounds.  
\end{abstract}

\subjclass[2000]{35B30, 35Q55}
\keywords{Boussinesq equation, Schr\"{o}dinger equation, local well-posedness, bilinear estimates.}

\maketitle

\section{Introduction}

The focus of this article is to develop a local well-posedness\footnote{Here, well-posedness is meant in the Hadamard sense: existence, uniqueness, and continuity of the data-to-solution map in appropriate topologies.} (LWP) theory for the Cauchy problem given by
\begin{equation}
\left\{
\begin{array}{l}
u_{tt}-\Delta u+\Delta^2u\pm\Delta(u^2)=0, \qquad u=u(x,t): \mathbb{R}^n\times I \to \mathbb{R},\\
\\
u(x,0)\,=\,u_0(x),\qquad u_t(x,0)\,=\,u_1(x),\\
\end{array}\right.
\label{main}
\end{equation}
where $0\in I\subseteq\R$ is an open interval and $(u_0,u_1)\in H^s(\R^n)\times H^{s-2}(\R^n)$. The differential equation above belongs to a family of equations called \emph{generalized Boussinesq equations}, with the $1+1$-dimensional version being known as the \emph{``good" Boussinesq equation}. 

In fact, the $1+1$-dimensional Cauchy problem is the best understood so far, with Kishimoto \cite{K13} showing that it is LWP for $s\geq -1/2$ and ill-posed (IP) for $s<-1/2$. This result capped a sustained drive for this problem with contributors like Bona-Sachs \cite{BS}, Linares \cite{L93}, Fang-Grillakis \cite{FG96}, Farah \cite{F09}, and Kishimoto-Tsugawa \cite{KT10}. Thus, our interest here is in investigating the high-dimensional (i.e., $n\geq 2$) case of \eqref{main}, for which, to our knowledge, the only available results are due to Farah \cite{F092} and Okamoto \cite{O17}. 

The former states that \eqref{main} is LWP for  $u_0\in H^s(\R^n)$, $u_1=\Delta \phi$ with $\phi\in H^{s}(\R^n)$, and 
\begin{equation*}
s\geq \max\left\{0, \frac{n-4}{2}\right\}.
\end{equation*}
We make the remark that the index $(n-4)/2$ appears naturally in connection to our problem since, by ignoring the lower order term $\Delta u$, the equation is scale-invariant under the transformation
\begin{equation*}
u\mapsto u_\l(x,t)=\l^{-2}u(\l^{-1} x, \l^{-2}t)
\end{equation*}
and one has
\begin{equation*}
\|u_\l(0)\|_{\dot{H}^s(\R^n)}= \l^{\frac{n-4}{2}-s}\|u_0\|_{\dot{H}^s(\R^n)}.
\end{equation*}
For the second result, Okamoto proved that \eqref{main} is IP for $(u_0,u_1)\in H^s(\R^n)\times H^{s-2}(\R^n)$ when $s<-1/2$, in the sense that norm inflation occurs and, as a consequence, the associated flow map is discontinuous everywhere. Hence, based on this picture, one is naturally led to study what happens in the regime when
\begin{equation*}
-\frac{1}{2}\leq s< \max\left\{0, \frac{n-4}{2}\right\}.
\end{equation*}
In particular, is it the case that \eqref{main} is LWP for $(u_0,u_1)\in H^s(\R^n)\times H^{s-2}(\R^n)$ with $s<0$ when $n\geq 2$? Our main result provides a partial positive answer to this question.

\begin{theorem} 
If $n=2$ or $n=3$, then \eqref{main} is LWP for $(u_0,u_1)\in H^s(\R^n)\times H^{s-2}(\R^n)$ with $-1/4<s<0$. 
\label{th-main}
\end{theorem}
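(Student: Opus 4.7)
The plan is to convert \eqref{main} into a first-order Schr\"{o}dinger-type system and run a Picard contraction in Bourgain-type spaces adapted to the dispersion $\phi(\xi) = \sqrt{|\xi|^2 + |\xi|^4}$ of the linear equation $u_{tt} - \Delta u + \Delta^2 u = 0$. Since $\phi(\xi) \sim |\xi|^2$ for $|\xi|\gtrsim 1$, local well-posedness at negative regularities is governed by an honest Schr\"{o}dinger evolution. Setting $v = u - i\phi(D)^{-1}u_t$, the Cauchy problem becomes
\begin{equation*}
(i\p_t - \phi(D))v \;=\; \mp\,\phi(D)^{-1}\Delta\!\left(\bigl(\tfrac{v+\bar v}{2}\bigr)^2\right), \qquad v(0) \;=\; u_0 - i\phi(D)^{-1}u_1,
\end{equation*}
and the multiplier $\phi(D)^{-1}\Delta$ is bounded at high frequencies and effectively one derivative at low frequencies, so the restriction $s > -1/4$ must come entirely from the high-frequency regime.

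After setting up $X^{s,b}_{\pm\phi}$-spaces adapted to the two dispersion branches $\tau = \pm\phi(\xi)$, and recording the standard linear and energy estimates on a time interval $[0,T]$, the problem reduces by Picard iteration to a bilinear estimate of the schematic form
\begin{equation*}
\bigl\|\phi(D)^{-1}\Delta(uv)\bigr\|_{X^{s,b-1}_{\pm\phi}} \;\lesssim\; \|u\|_{X^{s,b}_{\pm\phi}}\,\|v\|_{X^{s,b}_{\pm\phi}}
\end{equation*}
for some $b > 1/2$ and all sign choices, with $u,v$ possibly replaced by their complex conjugates. After a dyadic Littlewood-Paley decomposition in frequency and modulation and after dualizing, this reduces---modulo a benign low-frequency portion---to $L^2$ bounds on three-fold convolutions supported on the hypersurfaces $\tau = \pm\phi(\xi)$, which at high frequency are morally paraboloids. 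These are exactly bilinear Schr\"{o}dinger-type estimates, and sharp versions of them form the main technical input highlighted in the title.

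The principal obstacle will be the high-high to low frequency output interaction, in which two inputs at frequency $\sim N \gg 1$ combine on the paraboloids to produce an output at frequency $\sim M \ll N$. Here $\phi(D)^{-1}\Delta$ contributes a factor of size $\sim M$, while the derivative loss from placing the two inputs in $H^s$ with $s<0$ amounts to $\sim N^{-2s}$; both must be absorbed by the transversality gain between the two paraboloid pieces of radius $N$. The sharp bilinear Schr\"{o}dinger estimate in dimensions $n=2$ and $n=3$ supplies enough decoupling for this, and a careful accounting of the two-parameter dyadic summation in $(M,N)$ shows that the sum converges precisely when $s > -1/4$, matching the threshold in the statement. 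The remaining high-low and low-high cases carry a milder constraint and are handled by analogous but simpler arguments.

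Once the bilinear estimate is in place, a standard contraction in $X^{s,b}_T$ with $T = T(\|(u_0,u_1)\|_{H^s \times H^{s-2}})$ yields existence, uniqueness, and Lipschitz continuity of the data-to-solution map in the Hadamard sense. The automated summation procedure advertised in the abstract is precisely what makes the extensive dyadic bookkeeping underlying the bilinear estimate systematic and transparently verifiable.
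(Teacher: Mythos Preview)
Your outline is essentially the paper's strategy: diagonalize into a first-order Schr\"odinger-type problem, run Picard iteration in Bourgain spaces, and reduce everything to quadratic bilinear estimates whose worst case is the high-high $\to$ low interaction in the $(++-)$ sign configuration (the $\bar u\, v$ product), which is exactly where the threshold $s>-1/4$ emerges.

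Two differences are worth flagging. First, the paper does \emph{not} work with spaces adapted to the Boussinesq dispersion $\phi(\xi)=\sqrt{|\xi|^2+|\xi|^4}$. Instead it uses the substitution $v=u-i(1-\Delta)^{-1}u_t$ (following Kishimoto--Tsugawa), which converts \eqref{main} into an honest Schr\"odinger equation $iv_t-\Delta v = H(v,\bar v)$ with nonlinearity carrying the bounded multiplier $\omega(\xi)=|\xi|^2/(1+|\xi|^2)$. This lets one work directly in the standard spaces $X^{s,\theta}$ with weight $\langle\tau-|\xi|^2\rangle^\theta$ and plug into Tao's $[3;\R^n\times\R]$-multiplier machinery and sharp dyadic block estimates (Lemma~\ref{lm-++-}) without having to reprove them for a perturbed dispersion. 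Your route via $X^{s,b}_{\pm\phi}$ should in principle work, but it carries the overhead of re-deriving those block bounds for $\phi$; the paper's reduction sidesteps this entirely. Second, the paper inserts a scaling parameter $\lambda$ (replacing $\omega$ by $\omega_\lambda$) to make the initial data small enough for the contraction; this is how the time of existence $T\sim r^{-4/(2s-n+4)}$ is obtained, and your sketch omits any mechanism for smallness.

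One point in your heuristic needs correction. You write that in the high-high $\to$ low interaction the multiplier $\phi(D)^{-1}\Delta$ ``contributes a factor of size $\sim M$'' at the output frequency $M$. That is only true for $M\lesssim 1$; in the regime $1\lesssim M\lesssim N$ the multiplier is $\sim 1$ and gives no help. The paper checks this explicitly (see the discussion around \eqref{o-lm++-2}): even with the full $\omega_\lambda(D)$ multiplier present, the coherence subcase $N_1\sim N_3\gg N_2$, $H\sim L_2$ still forces $s>(\theta-1)/2$, hence $s>-1/4$. So the threshold is not a balance between the multiplier gain and the $N^{-2s}$ loss; it comes from the residual factor $N^{-2s+\theta-1}$ left over after the transversality gain in \eqref{++-2} has been spent.
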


The argument for this theorem is inspired by an approach due to Kishimoto-Tsugawa \cite{KT10} (see also \cite{K13} and \cite{O17}), in which the first step consists in reformulating \eqref{main} as the Cauchy problem for a nonlinear Schr\"{o}dinger equation with initial data in $H^{s}(\R^n)$. This is followed by setting up a contraction scheme for the integral version of this new Cauchy problem, where we use Bourgain functional spaces and corresponding linear and bilinear estimates.  

The structure of the paper is as follows. In the next section, we start by introducing the notation and terminology used throughout the article and by performing the reformulation step. Also there, we detail the contraction scheme and reduce it to the proof of a family of bilinear estimates related to the Schr\"{o}dinger equation. In section \ref{bil-section}, we revisit work by Colliander-Delort-Kenig-Staffilani \cite{CDKS-01} and Tao \cite{T-01} for this type of bounds, provide a unitary framework to tackle them, and derive results in previously unknown scenarios. In the final section, we discuss an innovative, automated method, based on a Python code, to deal with the summation component of the proof for the bilinear estimates, which might also be of independent interest.  

\subsection*{Acknowledgements}
The first author was supported in part by a grant from the Simons Foundation $\#\, 359727$. 


\section{Preliminaries}

\subsection{Notational conventions and terminology}
First, we agree to write $A\lesssim B$ in a certain setting when $A\leq CB$ and $C>0$ is a constant depending only upon fixed parameters which may change from one setting to another. Moreover, we write $A\sim B$ to denote that both $A\lesssim B$ and $B\lesssim A$ are valid. Next, we recall the notations $\langle a\rangle=(1+|a|^2)^{1/2}$ (for any $a\in\R^n$), 
\begin{equation*}
\widehat{z}(\xi)=\int_{\R^n} e^{-ix\xi}\,z(x)\,dx\quad \text{and}\quad \widehat{w}(\xi, \tau) =\int_{\R^n\times \R} e^{-i(x\xi+t\tau)}\,w(x,t)\,dx\,dt,
\end{equation*}
the last two representing the Fourier transform of $z=z(x)$ and the spacetime Fourier transform of $w=w(x,t)$, respectively. Finally, we let $\varphi=\varphi(t)$ denote the classical, smooth cutoff function $\varphi:\R\to\R$ satisfying $\varphi \equiv 1$ on $[-1,1]$ and supp$(\varphi)\subseteq [-2,2]$.

Following this, we define the Sobolev and Bourgain norms\footnote{From here on out, for a functional space $Y$, we write either $Y=Y(\R^n)$ or $Y=Y(\R^n\times \R)$ as the majority of such norms refers to these two particular situations.}
\begin{align}
\label{deh-hs}\|z\|_{H^s(\R^n)}&:=\|\langle \xi\rangle^s\widehat{z}(\xi)\|_{L^2_\xi(\R^n)},\\ 
\label{def-xst}\|w\|_{X^{s,\theta}(\R^n\times \R)}:=&\|\langle \xi\rangle^s\langle \tau-|\xi|^2\rangle^\theta\widehat{w}(\xi,\tau)\|_{L^2_{\xi,\tau}(\R^n\times \R)},
\end{align}
for arbitrary $s$, $\theta\in \R$. For $T>0$, we will also use the truncated norm
\begin{equation*}
\|z\|_{X^{s,\theta}_T}:=\inf_{w=z\, \text{on}\, [0,T]} \|w\|_{X^{s,\theta}}.
\end{equation*}
Working directly with these norms, one can easily prove the classical bound
\begin{equation}
\|w\|_{L^\infty_tH^s_x}\lesssim \|w\|_{X^{s,\theta}}
\label{Sob-X}
\end{equation}
and the inclusion $X^{s,\theta}\subset C(\R, H^s)$, both for all $s\in \R$ and $\theta>1/2$.

\subsection{Reformulation step}
As mentioned in the introduction, we start the argument for Theorem \ref{th-main} by rewriting \eqref{main} in the form of a Cauchy problem for a Schr\"{o}dinger equation. For this purpose, we define as in \cite{KT10} 
\begin{equation*}
v:=u-i(1-\Delta)^{-1}u_t\quad\text{and}\quad v_0:=u_0-i(1-\Delta)^{-1}u_1. 
\end{equation*}
Straightforward calculations reveal that 
\begin{equation}
\left\{
\begin{array}{l}
iv_{t}-\Delta v=H(v, \overline{v}):=\frac{\overline{v}-v}{2}\pm\omega(D)\left(\frac{v+\overline{v}}{2}\right)^2, \quad v=v(x,t): \mathbb{R}^n\times I \to \mathbb{C},\\
\\
v(x,0)\,=\,v_0(x),\\
\end{array}\right.
\label{main-2}
\end{equation}
where $\omega=\omega(D)$ is the spatial multiplier operator with symbol 
\[
\omega(\xi)=\frac{|\xi|^2}{1+|\xi|^2}.
\] 
Moreover, for an arbitrary $T>0$, the map $(u,u_0,u_1)\mapsto (v,v_0)$ from 
\[U:=(C([0,T], H^s)\cap C^1([0,T], H^{s-2}))\times H^s\times H^{s-2}\] 
to 
\[V:=C([0,T]; H^s)\times H^s\]
is Lipschitz continuous.
Conversely, if $v$ and $v_0$ satisfy \eqref{main-2}, then, by letting
\begin{equation*}
u=\frac{v+\overline{v}}{2}, \quad u_0=\frac{v_0+\overline{v_0}}{2}, \quad \text{and}\quad u_1=(1-\Delta)\left(\frac{\overline{v_0}-v_0}{2i}\right),
\end{equation*}
it is easy to check that that $u$, $u_0$, and $u_1$ are all real-valued and they satisfy \eqref{main}. Furthermore, noticing that 
\[
-2iu_t=(1-\Delta)(v-\overline{v}),
\]
one deduces that the map $(v,v_0)\in V\mapsto(u,u_0,u_1)\in U$ is also Lipschitz continuous. Thus, LWP in $H^s\times H^{s-2}$ for \eqref{main} is equivalent to LWP in $H^s$ for \eqref{main-2}. 

\subsection{Setting up the contraction argument and reducing it to the proof of bilinear Schr\"{o}dinger estimates} In proving that \eqref{main-2} is LWP for $v_0\in H^s$, we adopt the standard procedure and, using Duhamel's formula, write its integral version
\begin{equation}
v(t)=\, S(t)v_0-i\int_0^tS(t-t') H(v(t'), \overline{v}(t'))\,dt',
\label{stv}
\end{equation}
for which we set up a contraction argument using suitable $X^{s,\theta}$ spaces. Above, $S(t)=e^{-it\Delta}$ is the propagator for the linear Schr\"{o}dinger equation $iw_t-\Delta w=0$, i.e.,
\begin{equation*}
w(t)=S(t)w(0), \qquad (\forall)\, t\in\R.
\end{equation*}

\begin{remark}
By comparison, Farah \cite{F092} writes the main equation as
\begin{equation*}
u_{tt}+\Delta^2u=\Delta(u\mp u^2)
\end{equation*}
and, using the Fourier transform and Duhamel's formula, derives
\begin{equation*}
\aligned
u(t)=\, &\frac{S(t)+S(-t)}{2}\,u(0)+\frac{S(t)-S(-t)}{-2i\Delta}\,u_t(0)\\
&+\int_0^t\frac{S(t-t')-S(-t+t')}{2i}\,(-u(t')\pm u^2(t'))\,dt'.
\endaligned
\end{equation*}
Following this, he proves LWP for \eqref{main} by running a contraction argument for this integral formulation in functional spaces related to Strichartz-type estimates for the Schr\"{o}dinger group $(S(t))_{t\in\R}$.
\end{remark}

The next statement is our LWP result for \eqref{main-2}, which, as we argued, implies Theorem \ref{th-main}.

\begin{theorem}
For $n=2$ or $n=3$, if $\theta>1/2$, $(\theta-1)/2<s<0$, and $r\geq 1$, then, for any $\|v_0\|_{H^s}\leq r$, there exist $T\sim r^{-4/(2s-n+4)}$ and $v\in X^{s,\theta}_T\cap C([0,T], H^s)$ solving the integral equation \eqref{stv} on $[0,T]$ with the data-to-solution map
\[
v_0\in\{z;\,\|z\|_{H^s}\leq r\}\mapsto v\in C([0,T], H^s)\cap X^{s,\theta}_T
\]
being Lipschitz continuous. Moreover, this solution is unique in the class of $X^{s,\theta}_T\cap C([0,T], H^s)$ solutions for \eqref{stv}.
\label{main-th-2}
\end{theorem}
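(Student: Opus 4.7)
The plan is to run a standard Picard iteration for the integral equation \eqref{stv} in a closed ball of the Bourgain space $X^{s,\theta}_T$. For $R>0$ and $T\in(0,1]$ to be chosen, introduce
\[
\Phi(v)(t) := \varphi(t/T)\, S(t) v_0 \,-\, i\,\varphi(t/T)\int_0^t S(t-t')\, H(v(t'),\overline{v}(t'))\,dt',
\]
together with the ball $B_R := \{v\in X^{s,\theta}_T : \|v\|_{X^{s,\theta}_T}\leq R\}$. The goal is to prove that, for $T$ sufficiently small in terms of $r$, $\Phi$ maps $B_R$ into itself and is a strict contraction there, from which existence, uniqueness in $B_R$, and Lipschitz dependence on $v_0$ are immediate. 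Persistence in $C([0,T], H^s)$ follows afterwards from \eqref{Sob-X} and the inclusion $X^{s,\theta}\subset C(\R, H^s)$ already noted in the Preliminaries.

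The contraction estimate splits into three standard ingredients. First, I would invoke the classical linear bounds in Bourgain spaces,
\[
\|\varphi(t/T)\, S(t) v_0\|_{X^{s,\theta}} \lesssim \|v_0\|_{H^s}, \qquad \left\|\varphi(t/T)\int_0^t S(t-t') F(t')\,dt'\right\|_{X^{s,\theta}} \lesssim T^\delta\, \|F\|_{X^{s,\theta-1}},
\]
valid for $\theta>1/2$ and some $\delta=\delta(\theta)>0$, which are classical consequences of the support properties of $\varphi$. Second, the linear part $(\overline{v} - v)/2$ of $H(v,\overline{v})$ is trivially controlled in $X^{s,\theta-1}$ by $\|v\|_{X^{s,\theta}}$. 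Third, since $\omega(\xi)$ is a bounded symbol, the genuinely nonlinear piece $\omega(D)((v+\overline{v})/2)^2$ reduces to a bilinear Schr\"{o}dinger estimate of the schematic form
\[
\|v_1 v_2\|_{X^{s,\theta-1}} \lesssim \|v_1\|_{X^{s,\theta}}\|v_2\|_{X^{s,\theta}},
\]
together with its conjugate-factor variants. These are precisely the estimates to be established in Section \ref{bil-section} for $n=2,3$ whenever $s>(\theta-1)/2$, and I would cite them here as a black box.

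Combining the three ingredients yields the schematic bound
\[
\|\Phi(v)\|_{X^{s,\theta}_T} \leq C_0\, r + C_1\, T^\delta\, \bigl(\|v\|_{X^{s,\theta}_T} + \|v\|_{X^{s,\theta}_T}^2\bigr),
\]
and a parallel estimate for $\Phi(v_1)-\Phi(v_2)$. Setting $R=2C_0 r$ and enforcing $C_1 T^\delta(1+2R)\leq 1/2$ gives $T^\delta\sim r^{-1}$; once the precise power $\delta$ produced jointly by the bilinear estimate and the Duhamel bound is tracked, this reproduces the announced contraction time $T\sim r^{-4/(2s-n+4)}$, which is also consistent with the scaling of $u_\lambda$ recorded in the introduction. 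Uniqueness inside the contraction ball is automatic, while uniqueness in the full class $X^{s,\theta}_T\cap C([0,T], H^s)$ follows by the usual argument of comparing two candidate solutions on a short initial subinterval via the same bilinear estimates. The only substantive obstacle in this scheme is the bilinear estimate at the low regularity $-1/4<s$ permitted by the hypotheses, which is what drives the entire paper; everything surrounding the contraction step is textbook Bourgain-space machinery.
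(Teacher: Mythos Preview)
Your approach diverges from the paper's in a substantive way, and the divergence hides a real gap. You attempt to extract smallness directly from the time localization, claiming
\[
\left\|\varphi(t/T)\int_0^t S(t-t') F(t')\,dt'\right\|_{X^{s,\theta}} \lesssim T^\delta\, \|F\|_{X^{s,\theta-1}}
\]
for some $\delta>0$. This is not correct: with the matched exponents $\theta$ and $\theta-1$ the standard inhomogeneous estimate carries no positive power of $T$ (the formal gain is $T^{1+(\theta-1)-\theta}=T^0$). A genuine factor $T^\delta$ requires slack between the exponents, i.e.\ either placing the Duhamel output in $X^{s,\theta'}$ with $\theta'<\theta$, or controlling the nonlinearity in $X^{s,\theta-1+\delta}$; neither is available if you invoke the bilinear bounds of Section~\ref{bil-section} exactly as stated, since they are proved with input exponent $\theta$ and output exponent $\theta-1$ and you are closing in $X^{s,\theta}$. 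Consequently your contraction inequality $C_1T^\delta(1+2R)\leq 1/2$ cannot be enforced, and the scheme does not close.

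The paper avoids this obstruction by \emph{scaling} rather than by shrinking the time interval: it sets $v_\lambda(x,t)=\lambda^{-2}v(\lambda^{-1}x,\lambda^{-2}t)$, runs the contraction on the fixed interval $[0,1]$ using \eqref{lineq} (no $T^\delta$), and manufactures smallness from the data via \eqref{zhs}, which gives $\|v_{0\lambda}\|_{H^s}\lesssim \lambda^{n/2-s-2}r$. Choosing $\lambda\sim r^{2/(2s-n+4)}$ makes this $\lesssim 1$, and undoing the scaling returns the lifespan $T\sim\lambda^{-2}\sim r^{-4/(2s-n+4)}$. That specific exponent is therefore a scaling artifact; it is not something a generic $T^\delta$ argument would reproduce, so your remark that ``tracking the precise power $\delta$'' recovers it is unfounded. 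Note also that the scaling is why the paper formulates and proves the bilinear estimates \eqref{oubvb}--\eqref{oubv} with the rescaled multiplier $\omega_\lambda(D)$ rather than $\omega(D)$.
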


As is always the case with this type of results, they are the joint outcome of a set of estimates which are used in the context of a contraction scheme. For the above theorem, these bounds are
\begin{align}
\label{zhs}\|z_\l\|_{H^s}\lesssim \l^{\frac{n}{2}-s-2} \|z\|_{H^s},\ &\\
\label{wst-1}\|w\|_{X^{s,\theta-1}}+\|\overline{w}\|_{X^{s,\theta-1}}\lesssim \|w\|&_{X^{s,\theta}},\\
\label{lineq}\left\|\varphi(t)\left(S(t)z-i\int_0^tS(t-t')F(\cdot,t')\,dt'\right)\right\|_{X^{s,\theta}}&\lesssim \|z\|_{H^s} +\|F\|_{X^{s,\theta-1}},
\end{align}
and 
\begin{align}
\label{oubvb}\|\omega_\l(D)(\overline{u}\,\overline{v})\|_{X^{s,\theta-1}}\lesssim \|u\|_{X^{s,\theta}}\|v\|_{X^{s,\theta}},\\
\label{ouv}\|\omega_\l(D)(u\,v)\|_{X^{s,\theta-1}}\lesssim \|u\|_{X^{s,\theta}}\|v\|_{X^{s,\theta}},\\
\label{oubv}\|\omega_\l(D)(\overline{u}\,v)\|_{X^{s,\theta-1}}\lesssim \|u\|_{X^{s,\theta}}\|v\|_{X^{s,\theta}},
\end{align}
where $\l\geq 1$ is an arbitrary scaling parameter, $z_\l=z_\l(x)=\l^{-2}z(\l^{-1}x)$, and the multiplier operator $\omega_\l=\omega_\l(D)$ has the symbol $\omega_\l(\xi)=\omega(\l\xi)$. With the exception of the bilinear estimates, the other ones are by now somewhat classical with \eqref{zhs} and \eqref{wst-1} being directly argued from \eqref{deh-hs} and \eqref{def-xst}, while \eqref{lineq} appeared in a more general setting in Tao's monograph \cite{T-06} (Proposition 2.12). Furthermore, the way in which we combine \eqref{zhs}-\eqref{oubv} to derive Theorem \ref{main-th-2} mirrors closely the path followed by Kishimoto-Tsugawa in \cite{KT10} to prove their respective results. This is why we provide here only an outline of the argument for Theorem \ref{main-th-2} and refer the interested reader to \cite{KT10} for more details.

\begin{proof}[Sketch of proof for Theorem \ref{main-th-2}]
By letting $\l\geq 1$ denote an arbitrary scaling parameter and taking
\begin{equation*}
v_\l(x,t)=\l^{-2}v(\l^{-1}x,\l^{-2} t) \quad\text{and}\quad v_{0\l}(x)=\l^{-2}v_0(\l^{-1} x),
\end{equation*}
it follows that
\begin{equation}
v_\l(t)=\, S(t)v_{0\l}-i\int_0^tS(t-t') H_\l(v_\l(t'), \overline{v_\l}(t'))\,dt',
\label{stvl}
\end{equation}
where 
\begin{equation*}
H_\l(w, \overline{w}):=\l^{-2}\,\frac{\overline{w}-w}{2}\pm\omega_\l(D)\left(\frac{w+\overline{w}}{2}\right)^2.
\end{equation*}
It is clear that $v$ solves \eqref{stv} on the interval $[0,T]$ if and only if $v_\l$ solves  
\eqref{stvl} on $[0,\l^2T]$. The goal is to show that \eqref{stvl} admits a unique local solution on the time interval $[0,1]$ if $\l$ is chosen sufficiently large. 

For this reason, one works with the following modified version of \eqref{stvl}, 
\begin{equation}
v_\l(t)=\, \varphi(t)S(t)v_{0\l}-i\varphi(t)\int_0^tS(t-t') H_\l(v_\l(t'), \overline{v_\l}(t'))\,dt',
\label{stvl-2}
\end{equation}
and proves that it has a unique global-in-time solution. If we denote the right-hand side of this integral equation, with $v_{0\l}$ fixed, by $I_\l=I_\l(v_\l)$, then an application of \eqref{wst-1}-\eqref{oubv} yields
\begin{equation*}
\aligned
\|I_\l(v_\l)\|&_{X^{s,\theta}}\\
&\lesssim \|v_{0\l}\|_{H^s}+\|H_\l(v_\l, \overline{v_\l})\|_{X^{s,\theta-1}}\\
&\lesssim \|v_{0\l}\|_{H^s}+\l^{-2}\left(\|v_\l\|_{X^{s,\theta-1}}+\|\overline{v_\l}\|_{X^{s,\theta-1}}\right)+\left\|\omega_\l(D)\left(v_\l+\overline{v_\l}\right)^2\right\|_{X^{s,\theta-1}}\\
&\lesssim \|v_{0\l}\|_{H^s}+\l^{-2}\|v_\l\|_{X^{s,\theta}}+\|v_\l\|^2_{X^{s,\theta}}.
\endaligned
\end{equation*}
Similarly, one obtains
\begin{equation*}
\aligned
\|I_\l(v_\l)-I_\l(w_\l)\|_{X^{s,\theta}}\lesssim \left(\l^{-2}+\|v_\l\|_{X^{s,\theta}}+\|w_\l\|_{X^{s,\theta}}\right)\|v_\l-w_\l\|_{X^{s,\theta}}.
\endaligned
\end{equation*}
Based on these two estimates, we argue that for $R\sim \|v_{0\l}\|_{H^s}$ the mapping
\[
I_\l:\{\|w\|_{X^{s,\theta}}\leq R\}\to \{\|w\|_{X^{s,\theta}}\leq R\}
\]
is a contraction if we can choose $\l$ large enough and, at the same time, have\footnote{It is precisely the role of the scaling procedure to make the size of $\|v_{0\l}\|_{H^s}$ small enough to be amenable for the contraction argument.} $\|v_{0\l}\|_{H^s}\lesssim 1$. This is feasible by taking $\l\sim  r^{2/(2s-n+4)}$ and using \eqref{zhs}. Moreover, with this choice, we also obtain that the time of existence for solutions to \eqref{stv} satisfies $T\sim \l^{-2}\sim r^{-4/(2s-n+4)}$. 

The uniqueness claim follows by comparable arguments (also relying on \eqref{Sob-X}), for which we point  to the proof of Proposition 4.1 in \cite{KT10}. 
\end{proof}


\section{Bilinear estimates} \label{bil-section}

In this section, we focus our attention on proving \eqref{oubvb}-\eqref{oubv} and, for this purpose, we first revisit related results obtained by Colliander-Delort-Kenig-Staffilani \cite{CDKS-01} (see also earlier work addressing similar issues by Staffilani \cite{S-97}) and Tao \cite{T-01}. The former paper provided a sharp geometric analysis for bilinear bounds of the type
\begin{align}
\label{ubvb-1}\|\overline{u}\,\overline{v}\|_{X^{\sigma,\theta-1}}\lesssim \|u\|_{X^{s,\theta}}\|v\|_{X^{s,\theta}},\\
\label{uv-1}\|u\,v\|_{X^{\sigma,\theta-1}}\lesssim \|u\|_{X^{s,\theta}}\|v\|_{X^{s,\theta}},\\
\label{ubv-1}\|\overline{u}\,v\|_{X^{\sigma,\theta-1}}\lesssim \|u\|_{X^{s,\theta}}\|v\|_{X^{s,\theta}},
\end{align}
on $\R^{2+1}$ and then used them in the context of LWP for Schr\"{o}dinger equations with quadratic nonlinearities. The article by Tao took up the more general issue of multilinear estimates for arbitrary $X^{s,\theta}$ spaces and developed an abstract framework for proving them, which is now referred to in the literature as  the \emph{$[k;Z]$-multiplier norm method}. As an application of this method, the same paper established the bilinear estimate 
\begin{align}
\label{ubv-2}\|\overline{u}\,v\|_{X^{s,-1/2+\epsilon}}\lesssim \|u\|_{X^{s,1/2-\epsilon}}\|v\|_{X^{s,1/2-\epsilon}}
\end{align}
on $\R^{n+1}$ with $1\leq n\leq 3$, $\epsilon>0$, and $\epsilon\lesssim s+1/4\leq 1/4$, and made the claim that similar arguments lead to 
\begin{align}
\label{ubvb-2}\|\overline{u}\,\overline{v}\|_{X^{s,-1/2+\epsilon}}\lesssim \|u\|_{X^{s,1/2-\epsilon}}\|v\|_{X^{s,1/2-\epsilon}},\\
\label{uv-2}\|uv\|_{X^{s,-1/2+\epsilon}}\lesssim \|u\|_{X^{s,1/2-\epsilon}}\|v\|_{X^{s,1/2-\epsilon}},
\end{align}
on $\R^{n+1}$ when either $n=2$ and $s+3/4\gtrsim\epsilon$ or $n=3$ and $s+1/2\gtrsim\epsilon$.

In line with our main goal, we investigate the validity of \eqref{oubvb}-\eqref{oubv} on $\R^{n+1}$ with $n=2$ or $3$ for pairs of indices $(s,\theta)$ satisfying $s<0$ and $\theta >1/2$. Using the trivial observation
\[
\left|\widehat{\omega_\l(D)w}(\tau,\xi)\right|=\frac{\l^2|\xi|^2}{1+\l^2|\xi|^2}\left|\widehat{w}(\tau,\xi)\right|\leq \left|\widehat{w}(\tau,\xi)\right|,
\]
which yields
\[
\|\omega_\l(D)w\|_{X^{\tilde{s},\tilde{\theta}}}\leq \|w\|_{X^{\tilde{s},\tilde{\theta}}}
\]
for an arbitrary pair $(\tilde{s},\tilde{\theta})$, it follows that it is enough to look at
\begin{align}
\label{cucv-xst}\|\overline{u}\,\overline{v}\|_{X^{s,\theta-1}}\lesssim \|u\|_{X^{s,\theta}}\|v\|_{X^{s,\theta}},\\
\label{uv-xst}\|u\,v\|_{X^{s,\theta-1}}\lesssim \|u\|_{X^{s,\theta}}\|v\|_{X^{s,\theta}},\\
\label{cuv-xst}\|\overline{u}\,v\|_{X^{s,\theta-1}}\lesssim \|u\|_{X^{s,\theta}}\|v\|_{X^{s,\theta}},
\end{align}
under the same conditions for $n$, $s$ and $\theta$. 

Even though one can argue that whatever is needed for proving Theorem \ref{th-main} in terms of bilinear estimates is already covered by \eqref{ubvb-1}-\eqref{ubv-1} and \eqref{ubv-2}-\eqref{uv-2}, we choose to provide a stand-alone proof of \eqref{cucv-xst}-\eqref{cuv-xst} for a number of reasons. One is that we have a unitary argument for both $n=2$ and $n=3$. Another is that we are able to prove \eqref{ubvb-1}-\eqref{uv-1} for indices $\sigma$, $s$, and $\theta$ not covered in \cite{CDKS-01}. Finally, our proof suggests that, in principle, the pairs of indices $(s,\theta)$ for which \eqref{oubvb}-\eqref{oubv} hold true coincide with the ones available for the validity of \eqref{cucv-xst}-\eqref{cuv-xst}. Thus, it is very likely that the functional spaces on which we run the contraction argument need to be modified in order for the Sobolev regularity in Theorem \ref{th-main}  to be lowered.

In arguing for \eqref{cucv-xst}-\eqref{cuv-xst}, we rely on Tao's methodology, which is directly specialized to our setting. We denote
\[
\Gamma_3(\R^n\times \R)=\{((\xi_1,\tau_1),(\xi_2,\tau_2),(\xi_3,\tau_3))\in (\R^n\times \R)^3;  (\xi_1,\tau_1)+(\xi_2,\tau_2)+(\xi_3,\tau_3)=0\}
\]
and define
\[
\int_{\Gamma_3(\R^n\times \R)}f:=\int_{(\R^n\times \R)^2} f((\xi_1,\tau_1),(\xi_2,\tau_2),(-\xi_1-\xi_2,-\tau_1-\tau_2))\,d\xi_1d\tau_1d\xi_2d\tau_2.
\]
Any function $m:\Gamma_3(\R^n\times \R)\to\mathbb{C}$ is called a $[3; \R^n\times \R]$-multiplier and we let $\|m\|_{[3; \R^n\times \R]}$ denote the best constant for which
\begin{equation*}
\aligned
\bigg|\int_{\Gamma_3(\R^n\times \R)}m((\xi_1,\tau_1),&(\xi_2,\tau_2),(\xi_3,\tau_3))f_1(\xi_1,\tau_1)f_2(\xi_2,\tau_2)f_3(\xi_3,\tau_3)\bigg|\\
\leq&\|m\|_{[3; \R^n\times \R]}\|f_1\|_{L^2(\R^n\times \R)}\|f_2\|_{L^2(\R^n\times \R)}\|f_3\|_{L^2(\R^n\times \R)} 
\endaligned
\end{equation*}
is valid for all test functions $(f_i)_{1\leq i\leq 3}$ on $\R^n\times \R$. 

If we take for example \eqref{cucv-xst}, then, by applying duality and Plancherel's theorem, we can rewrite it equivalently as 
\begin{equation*}
\aligned
\bigg|\int_{\Gamma_3(\R^n\times \R)}\widehat{\overline{u}}(\xi_1,\tau_1)\widehat{\overline{v}}(\xi_2,&\tau_2)\widehat{\overline{w}}(\xi_3,\tau_3)\bigg|\\
\sim&\,\left|\int_{\R^n\times\R}\overline{u}(x,t)\overline{v}(x,t)\overline{w}(x,t)\,dxdt\right|\\
\lesssim&\,\|u\|_{X^{s,\theta}}\|v\|_{X^{s,\theta}} \|w\|_{X^{-s,1-\theta}}\\
=&\,\|\langle\xi\rangle^s\langle\tau-|\xi|^2\rangle^\theta\widehat{u}(\xi,\tau)\|_{L^2_{\xi,\tau}}\|\langle\xi\rangle^s\langle\tau-|\xi|^2\rangle^\theta\widehat{v}(\xi, \tau)\|_{L^2_{\xi,\tau}}\\
&\cdot\,\|\langle\xi\rangle^{-s}\langle\tau-|\xi|^2\rangle^{1-\theta}\widehat{w}(\xi, \tau)\|_{L^2_{\xi,\tau}}, 
\endaligned
\end{equation*}
which can be easily turned into
\begin{equation*}
\aligned
\bigg|\int_{\Gamma_3(\R^n\times \R)} \frac{\langle\xi_3\rangle^s\langle\tau_3+|\xi_3|^2\rangle^{\theta-1}}{\langle\xi_1\rangle^s\langle\tau_1+|\xi_1|^2\rangle^{\theta}\langle\xi_2\rangle^s\langle\tau_2+|\xi_2|^2\rangle^{\theta}}f_1(\xi_1,\tau_1)&f_2(\xi_2,\tau_2)f_3(\xi_3,\tau_3)\bigg|\\
\lesssim \|f_1\|_{L^2(\R^n\times \R)}\|f_2\|_{L^2(\R^n\times \R)}&\|f_3\|_{L^2(\R^n\times \R)}. 
\endaligned
\end{equation*}
Thus, according to the above definitions, proving \eqref{cucv-xst} is identical to showing that 
\begin{equation}
\left\|\frac{\langle\xi_3\rangle^s\langle\tau_3+|\xi_3|^2\rangle^{\theta-1}}{\langle\xi_1\rangle^s\langle\tau_1+|\xi_1|^2\rangle^{\theta}\langle\xi_2\rangle^s\langle\tau_2+|\xi_2|^2\rangle^{\theta}}\right\|_{[3,\R^n\times \R]}\lesssim 1
\label{cucv-norm}
\end{equation}
holds true, with similar multiplier-norm estimates being available for both \eqref{uv-xst} and \eqref{cuv-xst}. In fact, these new bounds can be stated generically in the form
\begin{equation}
\left\|\frac{\langle\xi_1\rangle^{-s}\langle\xi_2\rangle^{-s}\langle\xi_3\rangle^s}{\langle\tau_1-h_1(\xi_1)\rangle^{\theta}\langle\tau_2-h_2(\xi_2)\rangle^{\theta}\langle\tau_3-h_3(\xi_3)\rangle^{1-\theta}}\right\|_{[3,\R^n\times \R]}\lesssim 1,
\label{gen-mult}
\end{equation}
where $h_i(\xi)=\pm|\xi|^2$ for all $1\leq i\leq 3$.

At this point, Tao introduces the notation
\begin{equation*}
\lambda_i=\tau_i-h_i(\xi_i), \qquad 1\leq i\leq 3,
\end{equation*}
and defines the \emph{resonance function} $h:\Gamma_3(\R^n)\to \R$ by
\begin{equation}
h(\xi_1,\xi_2,\xi_3):=h_1(\xi_1)+h_2(\xi_2)+h_3(\xi_3).
\label{h-def}
\end{equation}
It is easy to see that on the support of the multiplier in \eqref{gen-mult} we have 
\begin{equation}
\xi_1+\xi_2+\xi_3=0\qquad\text{and}\qquad \lambda_1+\lambda_2+\lambda_3+h(\xi_1,\xi_2,\xi_3)=0.
\label{xlh}
\end{equation}
Next, it is argued that one can reduce the proof of \eqref{gen-mult} to the case when 
\begin{equation*}
\min\{|\lambda_1|,|\lambda_2|,|\lambda_3|\}\gtrsim 1 \qquad\text{and}\qquad \max\{|\xi_1|,|\xi_2|,|\xi_3|\}\gtrsim 1.
\end{equation*}
Following this, a dyadic decomposition for $(\xi_i)_{1\leq i\leq 3}$, $(\lambda_i)_{1\leq i\leq 3}$, and $h$ is performed and one infers
\begin{equation*}
\aligned
&\text{(LHS) of \eqref{gen-mult}}\\
&\qquad\lesssim \left\|\sum_{\max N_i\gtrsim 1}\sum_H\sum_{\min L_i\gtrsim 1} \frac{\langle N_1\rangle^{-s}\langle N_2\rangle^{-s}\langle N_3\rangle^{s}}{L_1^\theta L_2^\theta L_3^{1-\theta}}X_{N_1, N_2, N_3; H; L_1, L_2, L_3}\right\|_{[3,\R^n\times \R]}
\endaligned
\end{equation*}
where
\begin{equation}
\aligned
X_{N_1, N_2, N_3; H; L_1, L_2, L_3}&=X_{N_1, N_2, N_3; H; L_1, L_2, L_3}((\xi_1,\tau_1),(\xi_2,\tau_2),(\xi_3,\tau_3))\\
&:=\chi_{|h(\xi_1,\xi_2,\xi_3)|\sim H}\prod_{1\leq i\leq 3}\left(\chi_{|\xi_i|\sim N_i}\chi_{|\lambda_i|\sim L_i}\right)
\endaligned
\label{x-def}
\end{equation}
and $(N_i)_{1\leq i\leq 3}$, $(L_i)_{1\leq i\leq 3}$, and $H\in 2^{\mathbb Z}$. If we let $N_{max}\geq N_{med}\geq N_{min}$ denote the values of $N_1$, $N_2$, and $N_3$ in decreasing order, with a similar notation for the values of $L_1$, $L_2$, and $L_3$, then, based on \eqref{xlh}, we deduce that
\begin{equation}
N_{max}\sim N_{med} \qquad\text{and}\qquad L_{max}\sim \max\{H, L_{med}\}
\label{Nmax-Lmax}
\end{equation}
need to be valid in order for $X_{N_1, N_2, N_3; H; L_1, L_2, L_3}$ not to vanish. 

Using also the relative orthogonality of the dyadic decomposition, Tao is able to derive initially that
\begin{equation*}
\aligned
&\text{(LHS) of \eqref{gen-mult}}\\
&\qquad\lesssim \sup_{N\gtrsim 1}\Bigg\|\sum_{N_{max}\sim N_{med}\sim N}\sum_H\sum_{L_{max}\sim \max\{H, L_{med}\}} \frac{\langle N_1\rangle^{-s}\langle N_2\rangle^{-s}\langle N_3\rangle^{s}}{L_1^\theta L_2^\theta L_3^{1-\theta}}\\
&\quad\qquad\qquad\qquad\qquad\qquad\qquad\qquad\qquad\qquad\qquad\quad\cdot X_{N_1, N_2, N_3; H; L_1, L_2, L_3}\Bigg\|_{[3,\R^n\times \R]}
\endaligned
\end{equation*}
where the summation in the inner and the outer sums is in fact performed over all $L_i$'s and $N_i$'s, respectively, obeying the restriction listed under the sums\footnote{Similar summation conventions are used throughout this section. See also Section 2 in \cite{T-01}.}. Jointly with the triangle inequality, this implies that, for some $N\gtrsim 1$, at least one of the estimates 
\begin{equation*}
\aligned
\text{(LHS) of \eqref{gen-mult}}\lesssim\sum_{N_{max}\sim N_{med}\sim N}\sum_{L_{min}\gtrsim 1}&\frac{\langle N_1\rangle^{-s}\langle N_2\rangle^{-s}\langle N_3\rangle^{s}}{L_1^\theta L_2^\theta L_3^{1-\theta}}\\
&\cdot\left\| X_{N_1, N_2, N_3; L_{max}; L_1, L_2, L_3}\right\|_{[3,\R^n\times \R]}
\endaligned
\end{equation*}
and 
\begin{equation*}
\aligned
\text{(LHS) of \eqref{gen-mult}}\lesssim \sum_{N_{max}\sim N_{med}\sim N}\sum_{L_{max}\sim L_{med}}\sum_{H\ll L_{max}}&\frac{\langle N_1\rangle^{-s}\langle N_2\rangle^{-s}\langle N_3\rangle^{s}}{L_1^\theta L_2^\theta L_3^{1-\theta}}\\
& \cdot\left\| X_{N_1, N_2, N_3; H; L_1, L_2, L_3}\right\|_{[3,\R^n\times \R]}
\endaligned
\end{equation*}
holds true. In this way, \eqref{gen-mult} would follow if one shows that  
\begin{equation}
\aligned
\sum_{N_{max}\sim N_{med}\sim N}\sum_{L_{min}\gtrsim 1}&\frac{\langle N_1\rangle^{-s}\langle N_2\rangle^{-s}\langle N_3\rangle^{s}}{L_1^\theta L_2^\theta L_3^{1-\theta}}\\
&\cdot\left\| X_{N_1, N_2, N_3; L_{max}; L_1, L_2, L_3}\right\|_{[3,\R^n\times \R]}\lesssim 1
\endaligned
\label{lm-gen}
\end{equation}
and 
\begin{equation}
\aligned
\sum_{N_{max}\sim N_{med}\sim N}\sum_{L_{max}\sim L_{med}}\sum_{H\ll L_{max}}&\frac{\langle N_1\rangle^{-s}\langle N_2\rangle^{-s}\langle N_3\rangle^{s}}{L_1^\theta L_2^\theta L_3^{1-\theta}}\\
&\cdot\left\| X_{N_1, N_2, N_3; H; L_1, L_2, L_3}\right\|_{[3,\R^n\times \R]}\lesssim 1,
\endaligned
\label{hm-gen}
\end{equation}
for all values of $N\gtrsim 1$. Tao calls the setting of the first bound (i.e., $H\sim L_{max}$) the \emph{low modulation} case and the one for the second bound (i.e., $L_{max}\sim L_{med}\gg H$) the \emph{high modulation} case.

The first part of the argument for proving \eqref{lm-gen} and \eqref{hm-gen} consists in estimating the two multiplier norms and this has been achieved by Tao in a sharp manner. Given \eqref{h-def}, \eqref{x-def}, and the existing symmetries, the analysis is reduced to two scenarios. The so-called $(+++)$ case happens when $h_1(\xi)=h_2(\xi)=h_3(\xi)=|\xi|^2$ and, hence,
\begin{equation}
H\sim |h|=|\xi_1|^2+|\xi_2|^2+|\xi_3|^2\sim N_{max}^2.
\label{H-Nmax}
\end{equation}
The other instance, named the  $(++-)$ case, takes place when $h_1(\xi)=h_2(\xi)=-h_3(\xi)=|\xi|^2$ and, due to \eqref{xlh}, one has
\begin{equation}
H\sim |h|=\left||\xi_1|^2+|\xi_2|^2-|\xi_3|^2\right|=2|\xi_1\cdot \xi_2|\lesssim N_1N_2.
\label{H-N1-N2}
\end{equation}
The following are the combined outcomes of Propositions 11.1 and 11.2 in \cite{T-01} when $n\geq 2$.

\begin{lemma} Let $n\geq 2$ and take $N_1$, $N_2$, $N_3$, $L_1$, $L_2$, $L_3$, and $H$ to be positive numbers satisfying \eqref{Nmax-Lmax}.

\begin{itemize}

\item \textbf{$(+++)$ case}: both \eqref{H-Nmax} and
\begin{equation}
\left\| X_{N_1, N_2, N_3; H; L_1, L_2, L_3}\right\|_{[3,\R^n\times \R]}\lesssim L_{min}^{\frac 12}N_{max}^{-\frac 12}N_{min}^{\frac{n-1}{2}} \min\{N_{max}N_{min}, L_{med}\}^{\frac 12}
\label{loc+++}
\end{equation}
are valid.

\item \textbf{$(++-)$ case}: \eqref{H-N1-N2} holds true and

\begin{enumerate}

\item if $N_1\sim N_2\gg N_3$, the multiplier norm vanishes unless $H\sim N_1^2$ and, in this situation,
\begin{equation}
\left\| X_{N_1, N_2, N_3; H; L_1, L_2, L_3}\right\|_{[3,\R^n\times \R]}\lesssim L_{min}^{\frac 12}N_{max}^{-\frac 12}N_{min}^{\frac{n-1}{2}} \min\{N_{max}N_{min}, L_{med}\}^{\frac 12}
\label{++-1}
\end{equation}
is valid;

\item if $N_1\sim N_3\gg N_2$ and $H\sim L_2\gg L_1$, $L_3$, $N^2_2$, then
\begin{equation}
\left\| X_{N_1, N_2, N_3; H; L_1, L_2, L_3}\right\|_{[3,\R^n\times \R]}\lesssim L_{min}^{\frac 12}N_{max}^{-\frac 12}N_{min}^{\frac{n-1}{2}} \min\left\{H,\frac{H}{N^2_{min}} L_{med}\right\}^{\frac 12}
\label{++-2}
\end{equation}
is valid. The same estimate holds true if the roles of indices $1$ and $2$ are reversed. This is also called the \textbf{coherence subcase};

\item in all other instances not covered above and for $\epsilon>0$,
\begin{equation}
\aligned
\left\| X_{N_1, N_2, N_3; H; L_1, L_2, L_3}\right\|_{[3,\R^n\times \R]}\lesssim\, &L_{min}^{\frac 12}N_{max}^{-\frac 12}N_{min}^{\frac{n-1}{2}}\\ 
&\cdot\min\left\{H, L_{med}\right\}^{\frac 12}\min\left\{1, \frac{H}{N^2_{min}}\right\}^{\frac 12 -\epsilon}
\label{++-3}
\endaligned
\end{equation}
is valid, with the implicit constant depending on $\epsilon$. If $n=2$, $\epsilon$ can be removed. 

\end{enumerate}  

\end{itemize}
\label{lm-++-}
\end{lemma}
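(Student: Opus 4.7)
The plan is to follow Tao's \emph{$[k;Z]$-multiplier norm method}, specialized to the paraboloid geometry of the linear Schr\"odinger equation. By duality and Plancherel, the multiplier norm $\left\| X_{N_1, N_2, N_3; H; L_1, L_2, L_3}\right\|_{[3,\R^n\times \R]}$ equals the supremum, taken over $L^2$-normalized $f_i$ supported in the slabs $\{|\xi_i|\sim N_i,\ |\tau_i-h_i(\xi_i)|\sim L_i\}$, of the trilinear form $\int_{\Gamma_3(\R^n\times\R)} f_1 f_2 f_3$. Recognizing this as an inner product $\langle f_3, \check f_1 \star \check f_2\rangle$ in $L^2(\R^n\times\R)$, Cauchy-Schwarz reduces the task to controlling the $L^2$ norm of the spacetime convolution of two localized densities; by symmetry I am free to choose which pair to convolve and which factor to test against.

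Next, I would verify the resonance identities \eqref{H-Nmax} and \eqref{H-N1-N2} directly from \eqref{h-def} combined with $\xi_1+\xi_2+\xi_3=0$. In the $(+++)$ case, $h=|\xi_1|^2+|\xi_2|^2+|\xi_3|^2\sim N_{max}^2$. In the $(++-)$ case, substituting $\xi_3=-\xi_1-\xi_2$ gives $h=-2\,\xi_1\cdot\xi_2$, so $|h|\lesssim N_1N_2$; when $N_1\sim N_2\gg N_3$, the near-cancellation $\xi_1+\xi_2\approx 0$ forces $\xi_1\cdot\xi_2\sim -N_1^2$ and hence $H\sim N_1^2$. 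These identities fix the hypersurface supporting the convolution.

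The core quantitative step is a measure-theoretic convolution bound: for $g_i$ supported in a slab of thickness $L_i$ over the graph $\tau=h_i(\xi)$ with $|\xi|\sim N_i$, Cauchy-Schwarz in the convolution and a change of variables yield
\begin{equation*}
\|g_1\star g_2\|_{L^2}^2\lesssim L_{min}\cdot |E|\cdot\|g_1\|_{L^2}^2\|g_2\|_{L^2}^2,
\end{equation*}
where $|E|$ measures the feasible set for the remaining parameters after the slowest variable is frozen. Exploiting transversality of the two paraboloids when their footprints are well separated, one obtains $|E|\lesssim N_{max}^{-1} N_{min}^{n-1}\min\{N_{max}N_{min},L_{med}\}$. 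Pairing the two smallest-modulation functions and testing against the third produces precisely \eqref{loc+++} and \eqref{++-1}.

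The main obstacle is the coherence subcase \eqref{++-2}, where $N_1\sim N_3\gg N_2$ and the two large paraboloids become tangent along the $\xi_1$-direction, so transversality fails. Here I would foliate by level sets of the resonance function: the combined constraint $H\sim L_2\gg N_{min}^2$ localizes $\xi_2$ into a thin annular sector inside $|\xi_2|\sim N_2$ of transverse width $\sim L_{med}/N_{max}$, and measuring this sector yields the refined factor $\min\{H,(H/N_{min}^2)L_{med}\}^{1/2}$. The remaining case (3) is handled by a Whitney-type angular decomposition of $\xi_2$ relative to the direction of $\xi_1$, applying the transversal estimate on each cap and summing almost-orthogonally; when $n=2$ the angular parameter is one-dimensional so the decomposition is lossless, whereas for $n\geq 3$ the summation incurs a logarithmic cost which is absorbed into the arbitrary $\epsilon>0$.
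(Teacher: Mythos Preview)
The paper does not prove this lemma at all: immediately before the statement it says that the bounds ``are the combined outcomes of Propositions~11.1 and~11.2 in \cite{T-01} when $n\geq 2$,'' and no further argument is given. So there is nothing to compare against beyond the citation itself.

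Your sketch is a reasonable outline of Tao's own proof of those propositions: the reduction via duality and Cauchy--Schwarz to an $L^2$ convolution estimate, the verification of the resonance identities, the transversal measure bound yielding \eqref{loc+++} and \eqref{++-1}, the separate treatment of the coherence subcase where tangency forces a finer slicing in the small-frequency variable, and the angular Whitney decomposition with an $\epsilon$-loss in $n\geq 3$ for the residual case. In that sense your approach is exactly the one the paper defers to by citation; you are simply supplying the content that the paper omits. If anything, the level of detail in the coherence and Whitney steps would need to be expanded substantially to make this self-contained, but as a blueprint it matches Tao's argument and is consistent with how the paper invokes the result.
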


The second part of the proof for \eqref{lm-gen} and \eqref{hm-gen} consists in using the multiplier norm bounds from the previous lemma and performing the two summations. This is where we start, in earnest, our own argument. The following definition describes the indices $s$ and $\theta$ relevant to our analysis.

\begin{definition}
We say that the triplet $(n,s,\theta)$ is admissible if either
\begin{equation}
n=2, \qquad \frac{1}{2}< \theta\neq \frac{3}{4}, \qquad \max\left\{\theta-\frac{5}{4}, 2\theta-2\right\}\leq s<0,
\label{2t1}
\end{equation}
or
\begin{equation}
n=2,\qquad \theta=\frac{3}{4}, \qquad-\frac{1}{2}< s<0,
\label{2t2}
\end{equation}
or 
\begin{equation}
n=3, \qquad \theta>\frac{1}{2}, \qquad 2\theta-\frac{3}{2}\leq s<0.\label{3t}
\end{equation}
\label{nst}
\end{definition}

\begin{remark}
It is easy to verify that if $(n,s,\theta)$ is admissible then
\begin{equation}
s\geq 2\theta +\frac{n-6}{2} >\frac{n-4}{2}.
\label{ns}
\end{equation}
Moreover, if 
\begin{equation}
n=2 \ \text{or} \ n=3, \qquad \theta>1/2, \qquad
\frac{\theta-1}{2}<s<0,
\label{23ts}
\end{equation}
then a direct argument shows that $(n,s,\theta)$ is admissible.
\label{Rem}
\end{remark}


\begin{prop}
The bilinear estimate \eqref{cucv-xst} is valid if $(n,s,\theta)$ is admissible.
\label{cucv-prop}
\end{prop}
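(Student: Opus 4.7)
The argument specializes Tao's $[3;\R^n\times\R]$-multiplier norm framework already set up in the preceding paragraphs. By duality and Plancherel, \eqref{cucv-xst} is equivalent to the multiplier bound \eqref{cucv-norm}, which fits \eqref{gen-mult} with $h_1(\xi)=h_2(\xi)=h_3(\xi)=-|\xi|^2$. Up to the sign symmetry in Tao's analysis, this is the $(+++)$ case of Lemma \ref{lm-++-}, so throughout the support of $X_{N_1,N_2,N_3;H;L_1,L_2,L_3}$ one has $H\sim N_{max}^2\sim N^2$. Thus proving \eqref{cucv-xst} reduces to establishing the dyadic summation bounds \eqref{lm-gen} and \eqref{hm-gen} with the multiplier norm controlled by \eqref{loc+++}.

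The plan is to carry out these summations, in each of which, after substituting \eqref{loc+++}, the summand takes the form
\[
\frac{\langle N_1\rangle^{-s}\langle N_2\rangle^{-s}\langle N_3\rangle^{s}\,L_{min}^{1/2}\,N^{-1/2}\,N_{min}^{(n-1)/2}\,\min\{N\,N_{min},\,L_{med}\}^{1/2}}{L_1^\theta L_2^\theta L_3^{1-\theta}}.
\]
I would first split the sum into symmetry classes according to whether $N_{min}$ is realized by $N_3$ (which carries exponent $+s$) or by one of $N_1,N_2$ (exponent $-s$), and then split further according to which of the three $L_i$'s plays the role of $L_{max}$, $L_{med}$, $L_{min}$. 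In the low modulation regime \eqref{lm-gen} one $L_i$ is pinned at $\sim N^2$, while in the high modulation regime \eqref{hm-gen} one has $L_{max}\sim L_{med}\gtrsim N^2$ as a free summation parameter.

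The $L$-sums are handled in standard fashion: first in $L_{min}$, which converges absolutely because $\theta>1/2$; then in $L_{med}$, after splitting at the threshold $L_{med}\sim N\,N_{min}$ where the $\min\{N\,N_{min},L_{med}\}$ factor saturates; and finally in $L_{max}$ in the high modulation regime. Each resulting geometric series converges except at the borderline $\theta=3/4$ in dimension $n=2$, where a logarithm appears and forces the strict inequality in \eqref{2t2}. After the $L$-sums, what remains is a sum over dyadic $N_{min}\lesssim N$ of a monomial in $N_{min}$ and $N$; the conditions \eqref{2t1}--\eqref{3t} of Definition \ref{nst} are exactly the simultaneous requirement, across every subcase, that the exponent on $N_{min}$ be nonpositive (so the $N_{min}$-sum is bounded) and that the exponent on $N$ be nonpositive (so the outer $\sup_{N\gtrsim 1}$ is finite).

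The main obstacle is the combinatorial explosion of subcases: the product of \emph{which $N_i$ is smallest}, \emph{which $L_i$ sits where}, and \emph{which side of the saturation threshold $L_{med}$ falls on} generates a sizeable list of elementary linear conditions on $(s,\theta)$. Verifying each by hand is tedious but mechanical, and it is precisely this bookkeeping that the automated Python procedure described in the final section is designed to execute, with the admissible region of Definition \ref{nst} emerging as the simultaneous solution set of the generated inequalities.
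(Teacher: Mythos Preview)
Your overall strategy---reduce to the $(+++)$ case, invoke \eqref{loc+++}, and sum---matches the paper. However, the proposal is only a plan, not a proof, and two of its concrete claims are wrong.

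First, the paper avoids your combinatorial case-split entirely. Since $s<0$ and $\theta>1/2$, one has the uniform worst-case bound
\[
\frac{\langle N_1\rangle^{-s}\langle N_2\rangle^{-s}\langle N_3\rangle^{s}}{L_1^\theta L_2^\theta L_3^{1-\theta}}\lesssim \frac{N^{-2s}\langle N_{min}\rangle^{s}}{L_{min}^\theta L_{med}^\theta L_{max}^{1-\theta}},
\]
which collapses all of your symmetry classes into a single computation. The low and high modulation sums are then carried out explicitly in a few lines each; the automated procedure of the final section is illustrative, not a substitute for the argument here.

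Second, your localization of the logarithm is incorrect. The $L$-sums carry exponents $\frac12-\theta$ and $-\theta$, both strictly negative for every $\theta>\frac12$; nothing special happens there at $\theta=\frac34$. The logarithm arises instead in the $N_{min}$-sum $\sum_{1\lesssim N_{min}\lesssim N} N_{min}^{\,s+(n-1)/2}$ when $s=-(n-1)/2$, i.e., $s=-\frac12$ in dimension $n=2$. Combined with the outer factor $N^{-2s+2\theta-5/2}$, which equals $N^{0}$ precisely when $(s,\theta)=(-\frac12,\frac34)$, this is what forces the strict inequality in \eqref{2t2}.

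Third, your assertion that the exponent on $N_{min}$ must be nonpositive is false. For $n=3$ one always has $s+\frac{n-1}{2}=s+1>0$ in the admissible range, so the $N_{min}$-sum contributes a \emph{positive} power of $N$. The admissibility conditions of Definition~\ref{nst} are obtained by requiring the \emph{combined} final exponent on $N$ to be nonpositive (strictly, when a logarithm is present), not by bounding the $N_{min}$-sum on its own.
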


\begin{proof}
As argued before, the bound to be proven is equivalent to \eqref{cucv-norm} which, by using the compatible transformation $(\tau_1,\tau_2,\tau_3)\mapsto (-\tau_1,-\tau_2,-\tau_3)$, becomes
\[
\left\|\frac{\langle\xi_3\rangle^s\langle\tau_3-|\xi_3|^2\rangle^{\theta-1}}{\langle\xi_1\rangle^s\langle\tau_1-|\xi_1|^2\rangle^{\theta}\langle\xi_2\rangle^s\langle\tau_2-|\xi_2|^2\rangle^{\theta}}\right\|_{[3,\R^n\times \R]}\lesssim 1.
\] 
We are in the $(+++)$ case and we would be done if we show that \eqref{lm-gen} and \eqref{hm-gen} hold true in this setting. According to \eqref{H-Nmax}, we can assume $H\sim N^2_{max}\sim N^2$ and, since $s<0$ and $\theta>1/2$, we deduce
\begin{equation}
\frac{\langle N_1\rangle^{-s}\langle N_2\rangle^{-s}\langle N_3\rangle^{s}}{L_1^\theta L_2^\theta L_3^{1-\theta}}\lesssim \frac{N^{-2s}\langle N_{min}\rangle^{s}}{L_{min}^\theta L_{med}^\theta L_{max}^{1-\theta}}.
\label{NLst}
\end{equation}

We treat first \eqref{lm-gen}, for which one has $L_{max}\sim H\sim N^2$. If we take advantage jointly of \eqref{loc+++}, \eqref{NLst}, and $\theta>1/2$, then we can estimate the left-hand side of \eqref{lm-gen} by
\begin{equation*}
\aligned
&\text{(LHS) of \eqref{lm-gen}}\\
&\quad\lesssim N^{-2s+2\theta-\frac{5}{2}}\sum_{N_{min}\lesssim N}\ \sum_{1\lesssim L_{min}\leq L_{med}\lesssim N^2} \Big(\langle N_{min}\rangle^{s} N_{min}^{\frac{n-1}{2}}\\
&\quad\qquad\qquad\qquad\qquad\qquad\qquad\qquad\qquad\qquad\cdot L_{min}^{\frac{1}{2}-\theta}L_{med}^{-\theta} \min\{NN_{min}, L_{med}\}^{\frac 12}\Big)\\
&\quad\lesssim N^{-2s+2\theta-2}\sum_{N_{min}\lesssim N^{-1}}\ \sum_{1\lesssim L_{med}\lesssim N^2} N_{min}^{\frac{n}{2}}L_{med}^{-\theta}\\
&\quad \ \ \ + N^{-2s+2\theta-\frac{5}{2}}\sum_{N^{-1}\lesssim N_{min}\lesssim N}\ \sum_{1\lesssim L_{med}\lesssim NN_{min}} \langle N_{min}\rangle^{s} N_{min}^{\frac{n-1}{2}}L_{med}^{\frac{1}{2}-\theta}\\
&\quad \ \ \ + N^{-2s+2\theta-2}\sum_{N^{-1}\lesssim N_{min}\lesssim N}\ \sum_{NN_{min}\lesssim L_{med}\lesssim N^2} \langle N_{min}\rangle^{s} N_{min}^{\frac{n}{2}}L_{med}^{-\theta}\\
&\quad\lesssim N^{-2s+2\theta-2-\frac{n}{2}}+N^{-2s+2\theta-\frac{5}{2}}\sum_{N^{-1}\lesssim N_{min}\lesssim N}\langle N_{min}\rangle^{s} N_{min}^{\frac{n-1}{2}}\left(1+(NN_{min})^{\frac{1}{2}-\theta}\right)\\
&\quad \sim N^{-2s+2\theta-\frac{5}{2}}\left(1+\sum_{1\lesssim N_{min}\lesssim N}N_{min}^{s+\frac{n-1}{2}}\right).
\endaligned
\end{equation*}
A simple analysis based on how $s+(n-1)/2$ compares to $0$ yields that   
\[
N^{-2s+2\theta-\frac{5}{2}}\left(1+\sum_{1\lesssim N_{min}\lesssim N}N_{min}^{s+\frac{n-1}{2}}\right)\lesssim 1
\]  
if and only if  $(n,s,\theta)$ is admissible.

Next, we address \eqref{hm-gen}, for which we work with  $L_{max}\sim L_{med}\gg H\sim N^2$. This implies 
\begin{equation}
NN_{min}\lesssim N^2\ll L_{med},
\label{lmed-n-nmin}
\end{equation}
which leads to 
\begin{equation}
\min\{NN_{min}, L_{med}\}\sim NN_{min}.
\label{lmed-n-nmin-2}
\end{equation}
Together with \eqref{loc+++},\eqref{NLst}, and $\theta>1/2$, this fact allows us to infer
\begin{equation*}
\aligned
&\text{(LHS) of \eqref{hm-gen}}\\
&\qquad\qquad\lesssim N^{-2s}\sum_{N_{min}\lesssim N}\ \sum_{\stackrel{1\lesssim L_{min}\leq L_{med}\sim L_{max}}{N^2\ll L_{max}}} \langle N_{min}\rangle^{s} N_{min}^{\frac{n}{2}}L_{min}^{\frac{1}{2}-\theta}L_{max}^{-1}\\
&\qquad\qquad\lesssim N^{-2s-2}\sum_{N_{min}\lesssim N}\langle N_{min}\rangle^{s}N_{min}^{\frac{n}{2}} \sim N^{-2s-2}\left(1+\sum_{1\lesssim N_{min}\lesssim N}N_{min}^{s+\frac{n}{2}}\right).
\endaligned
\end{equation*}
Using now \eqref{ns}, we deduce
\[
N^{-2s-2}\left(1+\sum_{1\lesssim N_{min}\lesssim N}N_{min}^{s+\frac{n}{2}}\right)\sim N^{-s+\frac{n-4}{2}}\lesssim 1
\]
and the argument is concluded.
\end{proof}


\begin{prop}
The bilinear estimate \eqref{uv-xst} is valid if $(n,s,\theta)$ is admissible.
\label{uv-prop}
\end{prop}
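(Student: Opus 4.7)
The overall strategy mirrors that of Proposition~\ref{cucv-prop}. Via duality and Plancherel, the estimate \eqref{uv-xst} is equivalent to a multiplier-norm bound of the form \eqref{gen-mult}, but with a crucial difference: since we estimate $uv$ instead of $\overline{u}\,\overline{v}$, only the dual test function gets conjugated in the pairing, and one ends up with $h_1(\xi)=h_2(\xi)=|\xi|^2$ while $h_3(\xi)=-|\xi|^2$. No reflection of the $\tau$-variables can convert this to the $(+++)$ case, so we are genuinely in the $(++-)$ case of Lemma~\ref{lm-++-}, with resonance function $h(\xi_1,\xi_2,\xi_3)=-2\xi_1\cdot\xi_2$ and $H\lesssim N_1 N_2$.

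As in Proposition~\ref{cucv-prop}, the proof reduces to the low-modulation estimate \eqref{lm-gen} and the high-modulation estimate \eqref{hm-gen}, now invoking the multiplier-norm bounds from the $(++-)$ part of Lemma~\ref{lm-++-}. The plan is to organize the argument according to the relative sizes of the $N_i$'s, using the symmetry between $u$ and $v$ (i.e., between indices $1$ and $2$) to assume $N_1\leq N_2$. In the first subcase $N_3\sim N_{min}$, forcing $N_1\sim N_2\sim N$, Lemma~\ref{lm-++-}(1) gives $H\sim N^2$ and the bound \eqref{++-1}, which coincides with \eqref{loc+++}; moreover, the weight $\langle N_1\rangle^{-s}\langle N_2\rangle^{-s}\langle N_3\rangle^{s}\sim N^{-2s}\langle N_{min}\rangle^{s}$ is identical to that in Proposition~\ref{cucv-prop}. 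Consequently, the $L_{min}$, $L_{med}$, $N_{min}$ summations close by an argument essentially verbatim from the previous proposition, with admissibility delivering the required $\lesssim 1$ bound. In the second subcase $N_1\sim N_{min}\ll N_2\sim N_3\sim N$, the weight collapses to $\langle N_1\rangle^{-s}$ since the powers of $N$ cancel; here one further separates the coherence regime $H\sim L_1\gg L_2,L_3,N_1^2$ (use \eqref{++-2}) from the remainder (use \eqref{++-3}). Because $H$ is no longer pinned to $N^2$ but varies with $1\lesssim H\lesssim N_1 N$, an extra dyadic summation in $H$ enters both \eqref{lm-gen} and \eqref{hm-gen}.

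The main technical obstacle should be this second subcase under the non-coherent bound \eqref{++-3}: the factor $\min\{1,H/N_{min}^2\}^{1/2-\epsilon}$ forces a further split between $H\lesssim N_1^2$ and $N_1^2\ll H\lesssim N_1 N$, each requiring its own triple summation in $L_{min}$, $L_{med}$, and $H$. For $n=2$ the $\epsilon$ can be dropped (as remarked at the end of Lemma~\ref{lm-++-}), which is what makes the sharper admissibility range \eqref{2t1}--\eqref{2t2} attainable; for $n=3$ the unavoidable $\epsilon$-loss forces the slightly tighter range \eqref{3t}, exactly as Definition~\ref{nst} permits. The coherence bound \eqref{++-2} also demands care, since there $H\sim L_{max}$ can be large, and the gain $L_{med}/N_{min}^2$ inside the $\min$ must be exploited sharply in order to close the $L_{med}$-sum. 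Once all these summations are executed, the final power of $N$ should again reduce to $-s+(n-4)/2$, which is $\lesssim 1$ precisely under admissibility by \eqref{ns}.
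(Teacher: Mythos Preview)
Your overall architecture matches the paper's: rewrite \eqref{uv-xst} as a $(++-)$ multiplier bound, reduce to \eqref{lm-gen} and \eqref{hm-gen}, and run through the subcases of Lemma~\ref{lm-++-}. However, two points in your sketch are off.

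First, your case decomposition is incomplete. Lemma~\ref{lm-++-}(1) requires $N_1\sim N_2\gg N_3$, not merely $N_3\sim N_{min}$. The situation $N_1\sim N_2\sim N_3$ falls under subcase (3), not (1), and there $H$ is not pinned to $N^2$; it must be handled with \eqref{++-3}. The paper treats this separately and obtains the bound $N^{-s+2\theta+(n-6)/2}$, controlled via \eqref{ns}.

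Second, and more importantly, you have misidentified where the admissibility constraint actually bites. You claim the distinction between the $n=2$ and $n=3$ ranges arises from the $\epsilon$-loss in \eqref{++-3}. It does not: in the paper one simply chooses $0<\epsilon<\theta-1/2$, and the $\epsilon$ cancels out of the final exponent in every subcase where \eqref{++-3} is used, leaving $N^{-s+2\theta+(n-6)/2}\lesssim 1$ by \eqref{ns} alone. The sharp constraint comes instead from the subcase $N_1\sim N_2\gg N_3$, $H\sim N^2$, which via \eqref{++-1} reproduces exactly the $(+++)$ analysis of Proposition~\ref{cucv-prop} and yields
\[
N^{-2s+2\theta-\frac{5}{2}}\Bigl(1+\sum_{1\lesssim N_{min}\lesssim N} N_{min}^{s+\frac{n-1}{2}}\Bigr).
\]
Bounding this by $1$ is precisely what singles out the admissibility conditions \eqref{2t1}--\eqref{3t}; the different ranges for $n=2$ and $n=3$ come from the exponent $s+(n-1)/2$ here, not from any $\epsilon$. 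Correspondingly, your expectation that ``the final power of $N$ should again reduce to $-s+(n-4)/2$'' is too optimistic---that exponent appears only in some of the high-modulation and coherence subcases, while the decisive subcase produces the sharper exponents above.
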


\begin{proof}
Following the blueprint of deriving \eqref{cucv-norm}, we argue first that  \eqref{uv-xst} is equivalent to 
\[
\left\|\frac{\langle\xi_3\rangle^s\langle\tau_3+|\xi_3|^2\rangle^{\theta-1}}{\langle\xi_1\rangle^s\langle\tau_1-|\xi_1|^2\rangle^{\theta}\langle\xi_2\rangle^s\langle\tau_2-|\xi_2|^2\rangle^{\theta}}\right\|_{[3,\R^n\times \R]}\lesssim 1.
\]
Thus, we need to prove that both \eqref{lm-gen} and \eqref{hm-gen} hold true in the $(++-)$ case. We know that we can rely on \eqref{H-N1-N2} and, for each of the bounds, we have to go through all the three subcases covered in Lemma \ref{lm-++-}.

We start with the analysis for \eqref{lm-gen} and consider first the instance when $N_1\sim N_2\gg N_3$, which also forces $H\sim N_1^2$. Then, based on \eqref{++-1}, we see that we can estimate the left-hand side of \eqref{lm-gen} in identical fashion to the way we estimated it in the previous proposition. Hence, we obtain
\begin{equation*}
\aligned
&\text{(LHS) of \eqref{lm-gen}}\\
&\quad\lesssim N^{-2s+2\theta-\frac{5}{2}}\sum_{N_{min}\lesssim N}\ \sum_{1\lesssim L_{min}\leq L_{med}\lesssim N^2} \Big(\langle N_{min}\rangle^{s} N_{min}^{\frac{n-1}{2}}\\
&\quad\qquad\qquad\qquad\qquad\qquad\qquad\qquad\qquad\qquad\cdot L_{min}^{\frac{1}{2}-\theta}L_{med}^{-\theta} \min\{NN_{min}, L_{med}\}^{\frac 12}\Big)\\
&\quad \lesssim N^{-2s+2\theta-\frac{5}{2}}\left(1+\sum_{1\lesssim N_{min}\lesssim N}N_{min}^{s+\frac{n-1}{2}}\right)
\endaligned
\end{equation*}
and, consequently,  \eqref{lm-gen} is valid in this instance if $(n,s,\theta)$ is admissible. 

If we are in the second scenario of Lemma \ref{lm-++-}, by the symmetry of \eqref{lm-gen} in the indices $1$ and $2$, it is enough to work under the assumption that $N_1\sim N_3\gg N_2$ and $H\sim L_2\gg L_1$, $L_3$, $N_2^2$. Using \eqref{++-2}, $1/2<\theta<1$, and \eqref{ns}, we infer
\begin{equation*}
\aligned
&\text{(LHS) of \eqref{lm-gen}}\\
&\qquad\lesssim N^{-\frac{1}{2}}\sum_{N^{-1}\lesssim N_{min}\lesssim N}\ \sum_{\stackrel{1\lesssim L_{min}\leq L_{med}\leq L_{max}}{N_{min}^2\ll L_{max}\lesssim NN_{min}}}\bigg( \langle N_{min}\rangle^{-s} N_{min}^{\frac{n-1}{2}}\\
&\qquad\qquad\qquad\qquad\qquad\qquad\qquad\qquad\qquad\qquad\cdot L_{min}^{\frac{1}{2}-\theta}L_{med}^{\theta-1}L_{max}^{\frac{1}{2}-\theta}\min\left\{1, \frac{L_{med}}{N_{min}^2}\right\}^{\frac 12}\bigg)\\
&\qquad\lesssim N^{-\frac 12}\sum_{N^{-1}\lesssim N_{min}\lesssim 1}\ \sum_{1\lesssim L_{med}\leq L_{max}\lesssim NN_{min}} N_{min}^{\frac{n-1}{2}}L_{med}^{\theta-1}L_{max}^{\frac{1}{2}-\theta}\\
&\qquad \ \ \ + N^{-\frac 12}\sum_{1\lesssim N_{min}\lesssim N}\ \sum_{\stackrel{1\lesssim L_{med}\lesssim N^2_{min}}{N_{min}^2\ll L_{max}\lesssim NN_{min}}} N_{min}^{-s+\frac{n-3}{2}}L_{med}^{\theta-\frac{1}{2}}L_{max}^{\frac{1}{2}-\theta}\\
&\qquad \ \ \ + N^{-\frac 12}\sum_{1\lesssim N_{min}\lesssim N}\ \sum_{\stackrel{N^2_{min}\lesssim L_{med}\leq L_{max}}{N_{min}^2\ll L_{max}\lesssim NN_{min}}} N_{min}^{-s+\frac{n-1}{2}}L_{med}^{\theta-1}L_{max}^{\frac{1}{2}-\theta}\\
&\qquad\lesssim N^{-\frac 12}\left(1+\sum_{1\lesssim N_{min}\lesssim N}N_{min}^{-s+\frac{n-3}{2}}\right)\lesssim N^{-\frac 12}+\sum_{1\lesssim N_{min}\lesssim N}N_{min}^{-s+\frac{n-4}{2}}\sim 1,
\endaligned
\end{equation*}
which proves \eqref{lm-gen} in this scenario.

To finish the argument for \eqref{lm-gen}, we need to consider the third subcase of the $(++-)$ case in Lemma \ref{lm-++-}, which, reduced by symmetry, comes down to either $N_1\sim N_2\sim N_3$ or $N_1\sim N_3\gtrsim N_2$. For each of them, since $H\sim L_{max}$, we have
\begin{equation}
\min\{H, L_{med}\}\sim L_{med}\qquad \text{and}\qquad \min\left\{1, \frac{H}{N_{min}^2}\right\}\sim \min\left\{1, \frac{L_{max}}{N_{min}^2}\right\}.
\label{hlmed}
\end{equation}
Moreover, since $\theta>1/2$, it follows that
\begin{equation}
\frac{1}{L_1^\theta L_2^\theta L_3^{1-\theta}}\leq \frac{1}{L_{min}^\theta L_{med}^\theta L_{max}^{1-\theta}}.
\label{l1l2l3}
\end{equation}
Therefore, when $N_1\sim N_2\sim N_3$, these two facts together with \eqref{++-3} and $\theta>1/2$ allow us to deduce that
\begin{equation*}
\aligned
\text{(LHS) of \eqref{lm-gen}}
&\lesssim N^{-s+\frac{n-4}{2}+2\epsilon} \sum_{1\lesssim L_{min}\leq L_{med}\leq L_{max}\lesssim N^2} L_{min}^{\frac{1}{2}-\theta}L_{med}^{\frac{1}{2}-\theta}L_{max}^{\theta-\frac{1}{2}-\epsilon}\\
&\lesssim N^{-s+\frac{n-4}{2}+2\epsilon} \sum_{1\lesssim L_{max}\lesssim N^2} L_{max}^{\theta-\frac{1}{2}-\epsilon}.
\endaligned
\end{equation*}
By choosing $0<\epsilon<\theta-1/2$, we argue based on \eqref{ns} that
\begin{equation*}
N^{-s+\frac{n-4}{2}+2\epsilon} \sum_{1\lesssim L_{max}\lesssim N^2} L_{max}^{\theta-\frac{1}{2}-\epsilon}\sim N^{-s+2\theta+\frac{n-6}{2}}\lesssim 1,
\end{equation*}
which yields the desired result.

On the other hand, if we have $N_1\sim N_3\gtrsim N_2$, then, on the basis of \eqref{hlmed}, \eqref{l1l2l3}, \eqref{++-3}, $1/2<\theta<1$, and with the same choice for $\epsilon$, we obtain
\begin{equation*}
\aligned
&\text{(LHS) of \eqref{lm-gen}}\\
&\quad\lesssim N^{-\frac{1}{2}}\sum_{N^{-1}\lesssim N_{min}\lesssim N}\ \sum_{\stackrel{1\lesssim L_{min}\leq L_{med}\leq L_{max}}{L_{max}\lesssim NN_{min}}}\bigg( \langle N_{min}\rangle^{-s} N_{min}^{\frac{n-1}{2}}\\
&\quad\qquad\qquad\qquad\qquad\qquad\qquad\qquad\qquad\qquad\cdot L_{min}^{\frac{1}{2}-\theta}L_{med}^{\frac{1}{2}-\theta}L_{max}^{\theta-1}\min\left\{1, \frac{L_{max}}{N_{min}^2}\right\}^{\frac{1}{2}-\epsilon}\bigg)\\
&\quad\lesssim N^{-\frac 12}\sum_{N^{-1}\lesssim N_{min}\lesssim N}\ \sum_{\langle N_{min}\rangle^2\lesssim L_{max}\lesssim NN_{min}} \langle N_{min}\rangle^{-s}N_{min}^{\frac{n-1}{2}}L_{max}^{\theta-1}\\
&\quad \ \ \ + N^{-\frac 12}\sum_{1\lesssim N_{min}\lesssim N}\ \sum_{1\lesssim L_{max}\lesssim N^2_{min}} N_{min}^{-s+\frac{n-3}{2}+2\epsilon}L_{max}^{\theta-\frac{1}{2}-\epsilon}\\
&\quad\lesssim N^{-\frac 12}\left(\sum_{N^{-1}\lesssim N_{min}\lesssim N}\langle N_{min}\rangle^{-s+2\theta-2}N_{min}^{\frac{n-1}{2}}+\sum_{1\lesssim N_{min}\lesssim N}N_{min}^{-s+2\theta+\frac{n-5}{2}}\right)\\
&\quad\sim N^{-\frac 12}\left(1+\sum_{1\lesssim N_{min}\lesssim N}N_{min}^{-s+2\theta+\frac{n-5}{2}}\right).
\endaligned
\end{equation*}
It can be checked easily that if $(n,s,\theta)$ is admissible, then $-s+2\theta+(n-5)/2\neq 0$. Thus, we derive
\begin{equation}
N^{-\frac 12}\left(1+\sum_{1\lesssim N_{min}\lesssim N}N_{min}^{-s+2\theta+\frac{n-5}{2}}\right)\lesssim N^{-\frac 12}+N^{-s+2\theta+\frac{n-6}{2}}\lesssim 1,
\label{nst-52}
\end{equation}
where the last bound follows according to \eqref{ns}. This finishes the proof of \eqref{lm-gen}.

Next, we address \eqref{hm-gen}, for which the scenario $N_1\sim N_2\gg N_3$ and $H\sim N_1^2$ implies \eqref{lmed-n-nmin} and, hence, \eqref{lmed-n-nmin-2}. Then, we can estimate the left-hand side of \eqref{hm-gen} in exactly the same way as we estimated it in the previous proposition. Thus, we infer
\begin{equation*}
\aligned
\text{(LHS) of \eqref{hm-gen}}&\lesssim N^{-2s}\sum_{N_{min}\lesssim N}\ \sum_{\stackrel{1\lesssim L_{min}\leq L_{med}\sim L_{max}}{N^2\ll L_{max}}} \langle N_{min}\rangle^{s} N_{min}^{\frac{n}{2}}L_{min}^{\frac{1}{2}-\theta}L_{max}^{-1}\\
&\lesssim N^{-2s-2}\left(1+\sum_{1\lesssim N_{min}\lesssim N}N_{min}^{s+\frac{n}{2}}\right)\sim N^{-s+\frac{n-4}{2}}\lesssim 1.
\endaligned
\end{equation*}

The second subcase of the $(++-)$ case in Lemma \ref{lm-++-} does not apply here because $H\ll L_{max}$. The last one can be reduced by symmetry to the instances when either $N_1\sim N_2\sim N_3$ or $N_1\sim N_3\gtrsim N_2$. For each of them, we have
\begin{equation}
\min\{H, L_{med}\}\sim H, \label{h-lmed}
\end{equation}
while for the former we can also rely on
\begin{equation}
\min\left\{1,\frac{H}{N^2_{min}}\right\}\sim \frac{H}{N^2_{min}},
\label{h-nmin}
\end{equation}
due to \eqref{H-N1-N2}. Thus, when $N_1\sim N_2\sim N_3$, we argue based on \eqref{++-3}, applicable to $0<\epsilon<\theta-1/2$, and \eqref{ns} that 
\begin{equation*}
\aligned
&\text{(LHS) of \eqref{hm-gen}}\\
&\qquad\qquad\lesssim N^{-s+\frac{n-4}{2}+2\epsilon}\sum_{1\lesssim L_{min}\leq L_{med}\sim L_{max}} \ \sum_{H\lesssim \min\{L_{max}, N^2\}}L_{min}^{\frac{1}{2}-\theta}L_{max}^{-1}H^{1-\epsilon}\\
&\qquad\qquad\lesssim N^{-s+\frac{n-4}{2}+2\epsilon}\sum_{H\lesssim N^2}\ \sum_{\langle H\rangle\lesssim L_{max}}L_{max}^{-1}H^{1-\epsilon}\\
&\qquad\qquad\lesssim N^{-s+\frac{n-4}{2}+2\epsilon}\sum_{H\lesssim N^2}\langle H\rangle^{-1}H^{1-\epsilon}\sim N^{-s+\frac{n-4}{2}+2\epsilon}\lesssim N^{-s+2\theta+\frac{n-6}{2}}\lesssim 1.
\endaligned
\end{equation*}

For the case when $N_1\sim N_3\gtrsim N_2$, we use again \eqref{++-3} with $0<\epsilon<\theta-1/2$ and \eqref{nst-52} to deduce
\begin{equation*}
\aligned
&\text{(LHS) of \eqref{hm-gen}}\\
&\quad\lesssim N^{-\frac{1}{2}}\sum_{N_{min}\lesssim N}\ \sum_{1\lesssim L_{min}\leq L_{med}\sim L_{max}} \ \sum_{H\lesssim \min\{L_{max}, NN_{min}\}}\bigg(\langle N_{min}\rangle^{-s} N_{min}^{\frac{n-1}{2}}L_{min}^{\frac{1}{2}-\theta}\\
&\quad\qquad\qquad\qquad\qquad\qquad\qquad\qquad\qquad\qquad\qquad\quad \cdot L_{max}^{-1}H^{\frac 12}\min\left\{1,\frac{H}{N^2_{min}}\right\}^{\frac{1}{2}-\epsilon}\bigg)\\
&\quad\lesssim N^{-\frac{1}{2}}\sum_{N_{min}\lesssim N}\, \sum_{H\lesssim NN_{min}} \, \sum_{\langle H \rangle\lesssim L_{max}} \langle N_{min}\rangle^{-s} N_{min}^{\frac{n-1}{2}}L_{max}^{-1}H^{\frac 12}\min\left\{1,\frac{H}{N^2_{min}}\right\}^{\frac{1}{2}-\epsilon}\\
\endaligned
\end{equation*}
\begin{equation*}
\aligned
&\quad\lesssim N^{-\frac{1}{2}}\sum_{N_{min}\lesssim N}\, \sum_{H\lesssim NN_{min}}  \langle N_{min}\rangle^{-s} N_{min}^{\frac{n-1}{2}}\langle H \rangle^{-1}H^{\frac 12}\min\left\{1,\frac{H}{N^2_{min}}\right\}^{\frac{1}{2}-\epsilon}\\
&\quad\lesssim N^{-\frac{1}{2}}\sum_{N_{min}\lesssim N^{-1}}N_{min}^{\frac{n-1}{2}}\bigg(\sum_{H\lesssim N^2_{min}} \frac{H^{1-\epsilon}}{N^{1-2\epsilon}_{min}}+\sum_{N^2_{min}\lesssim H\lesssim NN_{min}}  H^{\frac 12}\bigg)\\
&\quad\quad+ N^{-\frac{1}{2}}\sum_{N^{-1}\lesssim N_{min}\lesssim 1}N_{min}^{\frac{n-1}{2}}\bigg(\sum_{H\lesssim N^2_{min}} \frac{H^{1-\epsilon}}{N^{1-2\epsilon}_{min}}+\sum_{N^2_{min}\lesssim H\lesssim 1}  H^{\frac 12}\\
&\quad\qquad\qquad\qquad\qquad\qquad\qquad\quad+\sum_{1\lesssim H\lesssim NN_{min}}  H^{-\frac 12}\bigg)\\
&\quad\quad+ N^{-\frac{1}{2}}\sum_{1\lesssim N_{min}\lesssim N}N_{min}^{-s+\frac{n-1}{2}}\bigg(\sum_{H\lesssim 1} \frac{H^{1-\epsilon}}{N^{1-2\epsilon}_{min}}+\sum_{1\lesssim H\lesssim N^2_{min}}\frac{H^{-\epsilon}}{N^{1-2\epsilon}_{min}}  \\
&\quad\qquad\qquad\qquad\qquad\qquad\qquad\quad+\sum_{N^2_{min}\lesssim H\lesssim NN_{min}}  H^{-\frac 12}\bigg)\\
&\quad\lesssim \sum_{N_{min}\lesssim N^{-1}}N_{min}^{\frac{n}{2}}+N^{-\frac{1}{2}}\left(\sum_{N^{-1}\lesssim N_{min}\lesssim 1}N_{min}^{\frac{n-1}{2}}+ \sum_{1\lesssim N_{min}\lesssim N}N_{min}^{-s+\frac{n-3}{2}+2\epsilon}\right)\\
&\quad\lesssim N^{-\frac{1}{2}}\left(1+ \sum_{1\lesssim N_{min}\lesssim N}N_{min}^{-s+2\theta+\frac{n-5}{2}}\right)\lesssim 1.
\endaligned
\end{equation*}
This finishes the proof of this proposition. 
\end{proof}


\begin{remark}
Following up on our rationale to argue for \eqref{cucv-xst}-\eqref{cuv-xst}, by comparison to what is proved in \cite{CDKS-01} for \eqref{ubvb-1}-\eqref{uv-1}, one can see that Propositions \ref{cucv-prop} and \ref{uv-prop} cover the previously unknown case for which
\begin{equation*}
\frac{1}{2}<\theta\neq \frac{3}{4}\quad\text{and}\quad\sigma=s=\max\left\{\theta-\frac{5}{4}, 2\theta-2\right\}<0.
\end{equation*}
 
\end{remark}


\begin{prop}
The bilinear estimate \eqref{cuv-xst} is valid if $(n,s,\theta)$ satisfy \eqref{23ts}.
\label{cuv-prop}
\end{prop}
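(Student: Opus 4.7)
The plan is to mirror the blueprint of Propositions \ref{cucv-prop} and \ref{uv-prop}. First, by duality and Plancherel, I would reformulate \eqref{cuv-xst} as the multiplier norm bound
\[
\left\|\frac{\langle\xi_1\rangle^{-s}\langle\xi_2\rangle^{-s}\langle\xi_3\rangle^s}{\langle\tau_1+|\xi_1|^2\rangle^{\theta}\langle\tau_2-|\xi_2|^2\rangle^{\theta}\langle\tau_3-|\xi_3|^2\rangle^{1-\theta}}\right\|_{[3,\R^n\times\R]}\lesssim 1,
\]
where the sign on $\tau_1$ arises from the complex conjugate on $u$. This places us in the $(++-)$ setting of Lemma \ref{lm-++-}, but with a crucial relabeling: indices $2$ and $3$ now play the role of the two ``agreeing'' dispersion signs, while index $1$ plays the role of the ``disagreeing'' one. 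Correspondingly, the resonance function becomes
\[
h(\xi_1,\xi_2,\xi_3)=-|\xi_1|^2+|\xi_2|^2+|\xi_3|^2=-2\xi_2\cdot\xi_3,
\]
so $H\lesssim N_2N_3$, and $H\sim N_{max}^2$ occurs precisely when $N_2\sim N_3\gg N_1$.

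Next, I would reduce to establishing \eqref{lm-gen} and \eqref{hm-gen} in this configuration and apply Lemma \ref{lm-++-} after the appropriate relabeling of indices (the $1$-$2$ symmetry of the Lemma corresponds to the $2$-$3$ symmetry of the present multiplier in the dispersion signs). The case analysis splits into: (i) the high-resonance subcase $N_2\sim N_3\gg N_1$, where $H\sim N^2$ and \eqref{++-1} applies; (ii) the coherence subcase, which by the $2$-$3$ symmetry reduces to either $N_1\sim N_2\gg N_3$ with $H\sim L_3\gg L_1, L_2, N_3^2$, or $N_1\sim N_3\gg N_2$ with $H\sim L_2\gg L_1, L_3, N_2^2$, both handled via \eqref{++-2}; and (iii) all remaining instances, including $N_1\sim N_2\sim N_3$, controlled via \eqref{++-3}. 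In each subcase I would estimate the dyadic weight $\frac{\langle N_1\rangle^{-s}\langle N_2\rangle^{-s}\langle N_3\rangle^{s}}{L_1^\theta L_2^\theta L_3^{1-\theta}}$, noting that its behavior depends sharply on the geometry: when $N_2\sim N_3\sim N$ the factors $\langle N_2\rangle^{-s}\langle N_3\rangle^s$ cancel and the weight reduces to $\langle N_1\rangle^{-s}$; when $N_1\sim N_2\gg N_3$ the full $N^{-2s}\langle N_3\rangle^s$ is retained; and when $N_1\sim N_3\gg N_2$ only the moderate factor $\langle N_2\rangle^{-s}$ survives.

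The summations over the remaining dyadic variables $(L_i)$ and $(N_i,H)$ then proceed along the lines of Proposition \ref{uv-prop}, using $s<0$, $1/2<\theta<1$, and the constraints \eqref{Nmax-Lmax}, \eqref{xlh}. The hardest step, I expect, will be the first coherence configuration $N_1\sim N_2\gg N_3$: here the adverse weight $N^{-2s}$ persists while the resonance constraint $H\lesssim N_2N_3\lesssim NN_3$ couples $L_3$ to the smallest frequency, leading to a tighter summation than in the analogous case of Proposition \ref{uv-prop}. It is precisely in this step that I anticipate needing the sharper hypothesis \eqref{23ts} (rather than general admissibility) to close the geometric series. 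Once this critical case is controlled, assembling the contributions from all subcases of both \eqref{lm-gen} and \eqref{hm-gen} will complete the argument.
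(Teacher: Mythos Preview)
Your plan is correct and follows essentially the same route as the paper's proof: the paper performs the compatible transformation $(\tau_i)\mapsto(-\tau_i)$ together with the index relabeling $(1,2,3)\mapsto(1,3,2)$ to land literally in the standard $(++-)$ form of Lemma~\ref{lm-++-} (arriving at the modified sums \eqref{lm++-2}--\eqref{hm++-2} with weight $\langle N_1\rangle^{-s}\langle N_2\rangle^{s}\langle N_3\rangle^{-s}/L_1^\theta L_2^{1-\theta}L_3^\theta$), whereas you keep the original indexing and relabel only when invoking the lemma---these are the same computations under a change of names. Your identification of the coherence configuration $N_1\sim N_2\gg N_3$ with $H\sim L_3$ as the critical step requiring the strict inequality $s>(\theta-1)/2$ is exactly right (it corresponds to the paper's $N_1\sim N_3\gg N_2$, $H\sim L_2$ case, where a logarithmic loss forces the sharper hypothesis \eqref{23ts}).
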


\begin{proof}
As in the case of the previous two results, one recognizes first that the above claim is equivalent to the multiplier norm bound
\[
\left\|\frac{\langle\xi_3\rangle^s\langle\tau_3+|\xi_3|^2\rangle^{\theta-1}}{\langle\xi_1\rangle^s\langle\tau_1+|\xi_1|^2\rangle^{\theta}\langle\xi_2\rangle^s\langle\tau_2-|\xi_2|^2\rangle^{\theta}}\right\|_{[3,\R^n\times \R]}\lesssim 1. 
\]
By using the compatible transformation $(\tau_1,\tau_2,\tau_3)\mapsto (-\tau_1,-\tau_2,-\tau_3)$ and relabeling the indices according to $(1,2,3)\mapsto (1,3,2)$, this can be rewritten as 
\begin{equation}
\left\|\frac{\langle\xi_3\rangle^{-s}\langle\tau_3+|\xi_3|^2\rangle^{-\theta}}{\langle\xi_1\rangle^s\langle\tau_1-|\xi_1|^2\rangle^{\theta}\langle\xi_2\rangle^{-s}\langle\tau_2-|\xi_2|^2\rangle^{1-\theta}}\right\|_{[3,\R^n\times \R]}\lesssim 1.
\label{cuv-norm}
\end{equation}
As in the derivation of \eqref{lm-gen} and \eqref{hm-gen}, the previous estimate would follow if we show that
\begin{equation}
\aligned
\sum_{N_{max}\sim N_{med}\sim N}\sum_{L_1, L_2, L_3\gtrsim 1}&\frac{\langle N_1\rangle^{-s}\langle N_2\rangle^{s}\langle N_3\rangle^{-s}}{L_1^\theta L_2^{1-\theta} L_3^{\theta}}\\
&\quad \cdot\left\| X_{N_1, N_2, N_3; L_{max}; L_1, L_2, L_3}\right\|_{[3,\R^n\times \R]}\lesssim 1
\endaligned
\label{lm++-2}
\end{equation}
and 
\begin{equation}
\aligned
\sum_{N_{max}\sim N_{med}\sim N}\sum_{L_{max}\sim L_{med}}\sum_{H\ll L_{max}}&\frac{\langle N_1\rangle^{-s}\langle N_2\rangle^{s}\langle N_3\rangle^{-s}}{L_1^\theta L_2^{1-\theta} L_3^{\theta}}\\
&\quad \cdot\left\| X_{N_1, N_2, N_3; H; L_1, L_2, L_3}\right\|_{[3,\R^n\times \R]}\lesssim 1
\endaligned
\label{hm++-2}
\end{equation}
hold true for any $N\gtrsim 1$. 

From \eqref{cuv-norm}, we see that we operate in the $(++-)$ case and, as such, we can rely on \eqref{H-N1-N2} and we perform an analysis based on the subcases described in Lemma \ref{lm-++-}. Furthermore, due to \eqref{23ts} and Remark \ref{Rem}, we can also take advantage of \eqref{ns}.

For the low modulation estimate \eqref{lm++-2}, if we are in the $N_1\sim N_2\gg N_3$ scenario, we also have that $H\sim L_{max}\sim N_1^2$. Thus, based on \eqref{++-1}, $1/2<\theta<1$, $s<0$, and \eqref{ns}, we infer
\begin{equation*}
\aligned
&\text{(LHS) of \eqref{lm++-2}}\\
&\quad\lesssim N^{2\theta-\frac{5}{2}}\sum_{N_{min}\lesssim N}\ \sum_{1\lesssim L_{min}\leq L_{med}\lesssim N^2} \Big(\langle N_{min}\rangle^{-s} N_{min}^{\frac{n-1}{2}}\\
&\quad\qquad\qquad\qquad\qquad\qquad\qquad\qquad\qquad\cdot L_{min}^{\frac{1}{2}-\theta}L_{med}^{-\theta} \min\{NN_{min}, L_{med}\}^{\frac 12}\Big)\\
&\quad\lesssim N^{2\theta-2}\sum_{N_{min}\lesssim N^{-1}}\ \sum_{1\lesssim L_{med}\lesssim N^2} N_{min}^{\frac{n}{2}}L_{med}^{-\theta}\\
&\quad \ \ \ + N^{2\theta-\frac{5}{2}}\sum_{N^{-1}\lesssim N_{min}\lesssim N}\ \sum_{1\lesssim L_{med}\lesssim NN_{min}} \langle N_{min}\rangle^{-s} N_{min}^{\frac{n-1}{2}}L_{med}^{\frac{1}{2}-\theta}\\
&\quad \ \ \ + N^{2\theta-2}\sum_{N^{-1}\lesssim N_{min}\lesssim N}\ \sum_{NN_{min}\lesssim L_{med}\lesssim N^2} \langle N_{min}\rangle^{-s} N_{min}^{\frac{n}{2}}L_{med}^{-\theta}\\
&\quad\lesssim N^{2\theta-\frac{n+4}{2}}+N^{2\theta-\frac{5}{2}}\sum_{N^{-1}\lesssim N_{min}\lesssim N}\langle N_{min}\rangle^{-s} N_{min}^{\frac{n-1}{2}}\left(1+(NN_{min})^{\frac{1}{2}-\theta}\right)\\
&\quad \sim N^{2\theta-\frac{n+4}{2}}+N^{-s+2\theta+\frac{n-6}{2}}\lesssim 1.
\endaligned
\end{equation*}

Next, if $N_1\sim N_3\gg N_2$ and $H\sim L_2\gg L_1$, $L_3$, $N_2^2$, then, using \eqref{++-2} and $\theta>1/2$, we derive that
\begin{equation*}
\aligned
&\text{(LHS) of \eqref{lm++-2}}\\
&\qquad\lesssim N^{-2s-\frac{1}{2}}\sum_{N^{-1}\lesssim N_{min}\lesssim N}\ \sum_{\stackrel{1\lesssim L_{min}\leq L_{med}\leq L_{max}}{N_{min}^2\ll L_{max}\lesssim NN_{min}}}\bigg( \langle N_{min}\rangle^{s} N_{min}^{\frac{n-1}{2}}\\
&\qquad\qquad\qquad\qquad\qquad\qquad\qquad\qquad\qquad\qquad\cdot L_{min}^{\frac{1}{2}-\theta}L_{med}^{-\theta}L_{max}^{\theta-\frac{1}{2}}\min\left\{1, \frac{L_{med}}{N_{min}^2}\right\}^{\frac 12}\bigg)\\
&\qquad\lesssim N^{-2s-\frac 12}\sum_{N^{-1}\lesssim N_{min}\lesssim 1}\ \sum_{1\lesssim L_{med}\leq L_{max}\lesssim NN_{min}} N_{min}^{\frac{n-1}{2}}L_{med}^{-\theta}L_{max}^{\theta-\frac{1}{2}}\\
&\qquad \ \ \ + N^{-2s-\frac 12}\sum_{1\lesssim N_{min}\lesssim N}\ \sum_{\stackrel{1\lesssim L_{med}\lesssim N^2_{min}}{N_{min}^2\ll L_{max}\lesssim NN_{min}}} N_{min}^{s+\frac{n-3}{2}}L_{med}^{\frac{1}{2}-\theta}L_{max}^{\theta-\frac{1}{2}}\\
\endaligned
\end{equation*}
\begin{equation*}
\aligned
&\qquad \ \ \ + N^{-2s-\frac 12}\sum_{1\lesssim N_{min}\lesssim N}\ \sum_{\stackrel{N^2_{min}\lesssim L_{med}\leq L_{max}}{N_{min}^2\ll L_{max}\lesssim NN_{min}}} N_{min}^{s+\frac{n-1}{2}}L_{med}^{-\theta}L_{max}^{\theta-\frac{1}{2}}\\
&\qquad\lesssim N^{-2s+\theta-1}\left(1+\sum_{1\lesssim N_{min}\lesssim N}N_{min}^{s+\theta+\frac{n-4}{2}}\right).
\endaligned
\end{equation*}
When $n=2$, we argue that $s<0$ and $\theta<1$ imply
\[
s+\theta+\frac{n-4}{2}<0
\]
and, thus, \eqref{lm++-2} is valid if $s\geq(\theta-1)/2$. When $n=3$ and $(n,s,\theta)$ is admissible, it is easy to check that $s+\theta+(n-4)/2$ can be either negative, positive, or equal to zero. If it is negative, then, as above, $s\geq(\theta-1)/2$ is a sufficient condition for  \eqref{lm++-2} to hold true. If it is positive, then we deduce with the help of \eqref{ns} that 
\begin{equation*}
\aligned
\text{(LHS) of \eqref{lm++-2}}\lesssim N^{-2s+\theta-1}+N^{-s+2\theta+\frac{n-6}{2}}\lesssim N^{-2s+\theta-1}+1
\endaligned
\end{equation*}
and, yet again, \eqref{lm++-2} is valid if $s\geq(\theta-1)/2$. If $s+\theta+(n-4)/2=0$, then we infer that
\begin{equation*}
\aligned
\text{(LHS) of \eqref{lm++-2}}\lesssim N^{-2s+\theta-1}\ln N\endaligned
\end{equation*}
and we need to impose the stricter condition $s>(\theta-1)/2$ for  \eqref{lm++-2} to hold true.

Given that, unlike \eqref{lm-gen}, \eqref{lm++-2} is not symmetric in the indices $1$ and $2$, we also need to consider the scenario when $N_2\sim N_3\gg N_1$ and $H\sim L_1\gg L_2$, $L_3$, $N_1^2$. In this situation, an application of \eqref{++-2} yields
\begin{equation*}
\aligned
&\text{(LHS) of \eqref{lm++-2}}\\
&\qquad\lesssim N^{-\frac{1}{2}}\sum_{N^{-1}\lesssim N_{min}\lesssim N}\ \sum_{\stackrel{1\lesssim L_{min}\leq L_{med}\leq L_{max}}{N_{min}^2\ll L_{max}\lesssim NN_{min}}}\bigg( \langle N_{min}\rangle^{-s} N_{min}^{\frac{n-1}{2}}\\
&\qquad\qquad\qquad\qquad\qquad\qquad\qquad\qquad\qquad\qquad\cdot L_{min}^{\frac{1}{2}-\theta}L_{med}^{\theta-1}L_{max}^{\frac{1}{2}-\theta}\min\left\{1, \frac{L_{med}}{N_{min}^2}\right\}^{\frac 12}\bigg),
\endaligned
\end{equation*}
which is identical with the estimate satisfied by the left-hand side of \eqref{lm-gen} for the subcase when $N_1\sim N_3\gg N_2$ and $H\sim L_2\gg L_1$, $L_3$, $N_2^2$. Hence, 
\begin{equation*}
\aligned
\text{(LHS) of \eqref{lm++-2}}\lesssim N^{-\frac 12}+\sum_{1\lesssim N_{min}\lesssim N}N_{min}^{-s+\frac{n-4}{2}}\sim 1.
\endaligned
\end{equation*}

In order to conclude the proof of \eqref{lm++-2}, we need to investigate the third subcase, which can be reduced to $N_1\sim N_2\sim N_3$, $N_2\sim N_3\gtrsim N_1$, and $N_1\sim N_3\gtrsim N_2$, without making extra assumptions. As in the previous proposition, in addition to $L_{max}\sim H\lesssim N_1N_2$, we can rely on \eqref{hlmed} and, since $\theta>1/2$, on
\begin{equation}
\frac{1}{L_1^\theta L_2^{1-\theta} L_3^{\theta}}\leq \frac{1}{L_{min}^\theta L_{med}^\theta L_{max}^{1-\theta}}
\label{l1l2l3-v2}
\end{equation}
for either of these scenarios. 

If $N_1\sim N_2\sim N_3$, then \eqref{++-3} implies
\begin{equation*}
\aligned
\text{(LHS) of \eqref{lm++-2}}
\lesssim N^{-s+\frac{n-4}{2}+2\epsilon} \sum_{1\lesssim L_{min}\leq L_{med}\leq L_{max}\lesssim N^2} L_{min}^{\frac{1}{2}-\theta}L_{med}^{\frac{1}{2}-\theta}L_{max}^{\theta-\frac{1}{2}-\epsilon},
\endaligned
\end{equation*}
which coincides with the initial bound satisfied by the left-hand side of \eqref{lm-gen} in the same situation. Thus, with the appropriate choice for $\epsilon$ (i.e., $0<\epsilon<\theta-1/2$), we obtain
\begin{equation*}
\aligned
\text{(LHS) of \eqref{lm++-2}}
\lesssim N^{-s+2\theta+\frac{n-6}{2}}\lesssim 1.
\endaligned
\end{equation*}

When $N_2\sim N_3\gtrsim N_1$, we use \eqref{hlmed}, \eqref{l1l2l3-v2}, and \eqref{++-3} to derive that
\begin{equation*}
\aligned
\text{(LHS) of \eqref{lm++-2}}\lesssim N^{-\frac{1}{2}}\sum_{N^{-1}\lesssim N_{min}\lesssim N}\ &\sum_{\stackrel{1\lesssim L_{min}\leq L_{med}\leq L_{max}}{L_{max}\lesssim NN_{min}}}\bigg( \langle N_{min}\rangle^{-s} N_{min}^{\frac{n-1}{2}}\\ 
&\cdot L_{min}^{\frac{1}{2}-\theta}L_{med}^{\frac{1}{2}-\theta}L_{max}^{\theta-1}\min\left\{1, \frac{L_{max}}{N_{min}^2}\right\}^{\frac{1}{2}-\epsilon}\bigg).
\endaligned
\end{equation*}
This estimate is identical to the one satisfied by the left-hand side of \eqref{lm-gen} when $N_1\sim N_3\gtrsim N_2$ and, thus, \eqref{lm++-2} holds true if $(n,s,\theta)$ is admissible.

If $N_1\sim N_3\gtrsim N_2$, then we can apply \eqref{hlmed}, \eqref{l1l2l3-v2}, \eqref{++-3}, and $1/2<\theta<1$, and take $0<\epsilon<\theta-1/2$ to argue that
\begin{equation*}
\aligned
&\text{(LHS) of \eqref{lm++-2}}\\
&\quad\lesssim N^{-2s-\frac{1}{2}}\sum_{N^{-1}\lesssim N_{min}\lesssim N}\ \sum_{\stackrel{1\lesssim L_{min}\leq L_{med}\leq L_{max}}{L_{max}\lesssim NN_{min}}}\bigg( \langle N_{min}\rangle^{s} N_{min}^{\frac{n-1}{2}}\\
&\quad\qquad\qquad\qquad\qquad\qquad\qquad\qquad\qquad\quad\cdot L_{min}^{\frac{1}{2}-\theta}L_{med}^{\frac{1}{2}-\theta}L_{max}^{\theta-1}\min\left\{1, \frac{L_{max}}{N_{min}^2}\right\}^{\frac{1}{2}-\epsilon}\bigg)\\
&\quad\lesssim N^{-2s-\frac 12}\sum_{N^{-1}\lesssim N_{min}\lesssim N}\ \sum_{\langle N_{min}\rangle^2\lesssim L_{max}\lesssim NN_{min}} \langle N_{min}\rangle^{s}N_{min}^{\frac{n-1}{2}}L_{max}^{\theta-1}\\
&\quad \ \ \ + N^{-2s-\frac 12}\sum_{1\lesssim N_{min}\lesssim N}\ \sum_{1\lesssim L_{max}\lesssim N^2_{min}} N_{min}^{s+\frac{n-3}{2}+2\epsilon}L_{max}^{\theta-\frac{1}{2}-\epsilon}\\
&\quad\lesssim N^{-2s-\frac 12}\left(\sum_{N^{-1}\lesssim N_{min}\lesssim N}\langle N_{min}\rangle^{s+2\theta-2}N_{min}^{\frac{n-1}{2}}+\sum_{1\lesssim N_{min}\lesssim N}N_{min}^{s+2\theta+\frac{n-5}{2}}\right)\\
&\quad\sim N^{-2s-\frac 12}\left(1+\sum_{1\lesssim N_{min}\lesssim N}N_{min}^{s+2\theta+\frac{n-5}{2}}\right).
\endaligned
\end{equation*}
It is easy to verify that, when $(n,s,\theta)$ is admissible, $s+2\theta+(n-5)/2$ can be either positive, negative, or equal to zero. As such
\begin{equation*}
N^{-2s-\frac 12}\left(1+\sum_{1\lesssim N_{min}\lesssim N}N_{min}^{s+2\theta+\frac{n-5}{2}}\right)\sim N^{-s+2\theta+\frac{n-6}{2}}, N^{-2s-\frac 12},\, \text{or}\ N^{-2s-\frac 12}\ln N,
\end{equation*}
respectively. Due to \eqref{ns}, we see that \eqref{lm++-2} would be valid in this case if we ask for $s>-1/4$, which is a weaker condition than $s>(\theta-1)/2$ imposed before. With this, the argument for 
\eqref{lm++-2} is finished.

Next, we turn to the proof of \eqref{hm++-2}, which is quite similar to the one for \eqref{hm-gen}. If $N_1\sim N_2\gg N_3$ and, hence, $H\sim N_1^2$, then we can rely on \eqref{lmed-n-nmin-2}. Jointly with \eqref{l1l2l3-v2}, \eqref{++-1}, $\theta>1/2$, and \eqref{ns}, it yields 
\begin{equation*}
\aligned
\text{(LHS) of \eqref{hm++-2}}&\lesssim \sum_{N_{min}\lesssim N}\ \sum_{\stackrel{1\lesssim L_{min}\leq L_{med}\sim L_{max}}{N^2\ll L_{max}}} \langle N_{min}\rangle^{-s} N_{min}^{\frac{n}{2}}L_{min}^{\frac{1}{2}-\theta}L_{max}^{-1}\\
&\lesssim N^{-2}\left(1+\sum_{1\lesssim N_{min}\lesssim N}N_{min}^{-s+\frac{n}{2}}\right)\sim N^{-s+\frac{n-4}{2}}\lesssim 1.
\endaligned
\end{equation*}

We have no coherence case to explore since $H\ll L_{max}$. Thus, all we are left to analyze is the stand-alone scenarios $N_1\sim N_2\sim N_3$, $N_2\sim N_3\gtrsim N_1$, and $N_1\sim N_3\gtrsim N_2$. First, we note that we can use \eqref{h-lmed} in all three of these cases. When $N_1\sim N_2\sim N_3$, \eqref{h-nmin} is also available. If we bring \eqref{l1l2l3-v2} and \eqref{++-3} into the mix, then we deduce
\begin{equation*}
\aligned
&\text{(LHS) of \eqref{hm++-2}}\\
&\qquad\qquad\lesssim N^{-s+\frac{n-4}{2}+2\epsilon}\sum_{1\lesssim L_{min}\leq L_{med}\sim L_{max}} \ \sum_{H\lesssim \min\{L_{max}, N^2\}}L_{min}^{\frac{1}{2}-\theta}L_{max}^{-1}H^{1-\epsilon},
\endaligned
\end{equation*}
which coincides with the estimate satisfied by the left-hand side of \eqref{hm-gen} in the same situation. Accordingly, by choosing $0<\epsilon<\theta-1/2$ and applying \eqref{ns}, we infer that \eqref{hm++-2} holds true in this instance.

If $N_2\sim N_3\gtrsim N_1$, then, with the help of \eqref{l1l2l3-v2}, \eqref{++-3}, and \eqref{h-lmed}, we obtain
\begin{equation*}
\aligned
&\text{(LHS) of \eqref{hm++-2}}\\
&\quad\lesssim N^{-\frac{1}{2}}\sum_{N_{min}\lesssim N}\ \sum_{1\lesssim L_{min}\leq L_{med}\sim L_{max}} \ \sum_{H\lesssim \min\{L_{max}, NN_{min}\}}\bigg(\langle N_{min}\rangle^{-s} N_{min}^{\frac{n-1}{2}}L_{min}^{\frac{1}{2}-\theta}\\
&\quad\qquad\qquad\qquad\qquad\qquad\qquad\qquad\qquad\qquad\qquad\quad \cdot L_{max}^{-1}H^{\frac 12}\min\left\{1,\frac{H}{N^2_{min}}\right\}^{\frac{1}{2}-\epsilon}\bigg).
\endaligned
\end{equation*}
This is identical to the bound satisfied by the left-hand side of \eqref{hm-gen} when $N_1\sim N_3\gtrsim N_2$ and, hence, \eqref{hm++-2} is seen to be valid by taking $\epsilon$ as above and relying on \eqref{nst-52}.

When $N_1\sim N_3\gtrsim N_2$, a very similar argument leads to
\begin{equation*}
\aligned
&\text{(LHS) of \eqref{hm++-2}}\\
&\quad\lesssim N^{-2s-\frac{1}{2}}\sum_{N_{min}\lesssim N}\ \sum_{1\lesssim L_{min}\leq L_{med}\sim L_{max}} \ \sum_{H\lesssim \min\{L_{max}, NN_{min}\}}\bigg(\langle N_{min}\rangle^{s} N_{min}^{\frac{n-1}{2}}L_{min}^{\frac{1}{2}-\theta}\\
&\quad\qquad\qquad\qquad\qquad\qquad\qquad\qquad\qquad\qquad\qquad\quad \cdot L_{max}^{-1}H^{\frac 12}\min\left\{1,\frac{H}{N^2_{min}}\right\}^{\frac{1}{2}-\epsilon}\bigg)\\
&\quad\lesssim N^{-2s-\frac{1}{2}}\sum_{N_{min}\lesssim N}\, \sum_{H\lesssim NN_{min}} \, \sum_{\langle H \rangle\lesssim L_{max}} \langle N_{min}\rangle^{s} N_{min}^{\frac{n-1}{2}}L_{max}^{-1}H^{\frac 12}\min\left\{1,\frac{H}{N^2_{min}}\right\}^{\frac{1}{2}-\epsilon}\\
&\quad\lesssim N^{-2s-\frac{1}{2}}\sum_{N_{min}\lesssim N}\, \sum_{H\lesssim NN_{min}}  \langle N_{min}\rangle^{s} N_{min}^{\frac{n-1}{2}}\langle H \rangle^{-1}H^{\frac 12}\min\left\{1,\frac{H}{N^2_{min}}\right\}^{\frac{1}{2}-\epsilon}\\
\endaligned
\end{equation*}
\begin{equation*}
\aligned
&\quad\lesssim N^{-2s-\frac{1}{2}}\sum_{N_{min}\lesssim N^{-1}}N_{min}^{\frac{n-1}{2}}\bigg(\sum_{H\lesssim N^2_{min}} \frac{H^{1-\epsilon}}{N^{1-2\epsilon}_{min}}+\sum_{N^2_{min}\lesssim H\lesssim NN_{min}}  H^{\frac 12}\bigg)\\
&\quad\quad+ N^{-2s-\frac{1}{2}}\sum_{N^{-1}\lesssim N_{min}\lesssim 1}N_{min}^{\frac{n-1}{2}}\bigg(\sum_{H\lesssim N^2_{min}} \frac{H^{1-\epsilon}}{N^{1-2\epsilon}_{min}}+\sum_{N^2_{min}\lesssim H\lesssim 1}  H^{\frac 12}\\
&\quad\qquad\qquad\qquad\qquad\qquad\qquad\quad+\sum_{1\lesssim H\lesssim NN_{min}}  H^{-\frac 12}\bigg)\\
&\quad\quad+ N^{-2s-\frac{1}{2}}\sum_{1\lesssim N_{min}\lesssim N}N_{min}^{s+\frac{n-1}{2}}\bigg(\sum_{H\lesssim 1} \frac{H^{1-\epsilon}}{N^{1-2\epsilon}_{min}}+\sum_{1\lesssim H\lesssim N^2_{min}}\frac{H^{-\epsilon}}{N^{1-2\epsilon}_{min}}  \\
&\quad\qquad\qquad\qquad\qquad\qquad\qquad\quad+\sum_{N^2_{min}\lesssim H\lesssim NN_{min}}  H^{-\frac 12}\bigg)\\
&\quad\lesssim N^{-2s}\sum_{N_{min}\lesssim N^{-1}}N_{min}^{\frac{n}{2}}+N^{-2s-\frac{1}{2}}\left(\sum_{N^{-1}\lesssim N_{min}\lesssim 1}N_{min}^{\frac{n-1}{2}}+ \sum_{1\lesssim N_{min}\lesssim N}N_{min}^{s+\frac{n-3}{2}+2\epsilon}\right)\\
&\quad\lesssim N^{-2s-\frac{1}{2}}\left(1+ \sum_{1\lesssim N_{min}\lesssim N}N_{min}^{s+2\theta+\frac{n-5}{2}}\right),
\endaligned
\end{equation*}
which coincides with the estimate derived for \eqref{lm++-2} in the same scenario. It follows that \eqref{hm++-2} holds true if we impose $s>-1/4$. This concludes the proof of \eqref{hm++-2} and of the entire proposition.

\end{proof}


For the purpose of obtaining LWP results using the framework in our paper, we notice that both \eqref{cucv-xst} and \eqref{uv-xst} require $s>-3/4$ and $s>-1/2$ when $n=2$ and $n=3$, respectively. On the other hand, \eqref{cuv-xst} asks for $s>-1/4$ when either $n=2$ or $n=3$. Hence, a natural question is whether the actual bilinear estimates needed for the fixed point argument (i.e., \eqref{oubvb}-\eqref{oubv}) would be valid for lower values of $s$ than the ones above. We next address comments made earlier that, in our judgement, this is not the case. We take a look at \eqref{oubv} with $\l=1$ chosen for convenience, which, arguing as in the derivation of \eqref{cuv-norm}, is equivalent to
\begin{equation*}
\left\|\frac{|\xi_2|^2\langle\xi_3\rangle^{-s}\langle\tau_3+|\xi_3|^2\rangle^{-\theta}}{\langle\xi_1\rangle^s\langle\tau_1-|\xi_1|^2\rangle^{\theta}\langle\xi_2\rangle^{2-s}\langle\tau_2-|\xi_2|^2\rangle^{1-\theta}}\right\|_{[3,\R^n\times \R]}\lesssim 1.
\end{equation*} 
The corresponding low modulation estimate is given by
\begin{equation}
\aligned
\sum_{N_{max}\sim N_{med}\sim N}\sum_{L_1, L_2, L_3\gtrsim 1}&\frac{\langle N_1\rangle^{-s}N_2^2\langle N_2\rangle^{s-2}\langle N_3\rangle^{-s}}{L_1^\theta L_2^{1-\theta} L_3^{\theta}}\\
&\quad \cdot\left\| X_{N_1, N_2, N_3; L_{max}; L_1, L_2, L_3}\right\|_{[3,\R^n\times \R]}\lesssim 1
\endaligned
\label{o-lm++-2}
\end{equation}
and we consider the coherence scenario where, in addition to \eqref{H-N1-N2}, one has $N_1\sim N_3\gg N_2$ and $H\sim L_2\gg L_1$, $L_3$, $N^2_2$. By applying \eqref{++-2} and $\theta>1/2$, we derive that
\begin{equation*}
\aligned
&\text{(LHS) of \eqref{o-lm++-2}}\\
&\qquad\lesssim N^{-2s-\frac{1}{2}}\sum_{N^{-1}\lesssim N_{min}\lesssim N}\ \sum_{\stackrel{1\lesssim L_{min}\leq L_{med}\leq L_{max}}{N_{min}^2\ll L_{max}\lesssim NN_{min}}}\bigg( \langle N_{min}\rangle^{s-2} N_{min}^{\frac{n+3}{2}}\\
&\qquad\qquad\qquad\qquad\qquad\qquad\qquad\qquad\qquad\qquad\cdot L_{min}^{\frac{1}{2}-\theta}L_{med}^{-\theta}L_{max}^{\theta-\frac{1}{2}}\min\left\{1, \frac{L_{med}}{N_{min}^2}\right\}^{\frac 12}\bigg)\\
&\qquad\lesssim N^{-2s-\frac 12}\sum_{N^{-1}\lesssim N_{min}\lesssim 1}\ \sum_{1\lesssim L_{med}\leq L_{max}\lesssim NN_{min}} N_{min}^{\frac{n+3}{2}}L_{med}^{-\theta}L_{max}^{\theta-\frac{1}{2}}\\
&\qquad \ \ \ + N^{-2s-\frac 12}\sum_{1\lesssim N_{min}\lesssim N}\ \sum_{\stackrel{1\lesssim L_{med}\lesssim N^2_{min}}{N_{min}^2\ll L_{max}\lesssim NN_{min}}} N_{min}^{s+\frac{n-3}{2}}L_{med}^{\frac{1}{2}-\theta}L_{max}^{\theta-\frac{1}{2}}\\
&\qquad \ \ \ + N^{-2s-\frac 12}\sum_{1\lesssim N_{min}\lesssim N}\ \sum_{\stackrel{N^2_{min}\lesssim L_{med}\leq L_{max}}{N_{min}^2\ll L_{max}\lesssim NN_{min}}} N_{min}^{s+\frac{n-1}{2}}L_{med}^{-\theta}L_{max}^{\theta-\frac{1}{2}}\\
&\qquad\lesssim N^{-2s+\theta-1}\left(1+\sum_{1\lesssim N_{min}\lesssim N}N_{min}^{s+\theta+\frac{n-4}{2}}\right),
\endaligned
\end{equation*}
which coincides with the bound obtained in the same setting in the previous proposition. As argued there, one would still need to impose $s>(\theta-1)/2$ (and, thus, $s>-1/4$) for \eqref{o-lm++-2} to hold true.


\section{Alternative method for the summation argument}

In this section, we propose an alternative way to perform the summation component for the proofs of \eqref{lm-gen} and \eqref{hm-gen} (as well as for the ones of \eqref{lm++-2} and \eqref{hm++-2}). It is based on a Python code which streamlines the summation process and, in our opinion, has the potential to be readily adaptable to other similar problems.

In order to explain the idea behind this method, let us discuss first some elementary examples. As in the previous section, we adopt the convention that all variables involved in summations assume only dyadic values. Clearly, for $B$ fixed, one has
\begin{equation*}
\sum_{A \lesssim B} A \sim B.
\end{equation*}
However, when slightly more involved conditional inequalities are introduced in the summation, e.g.,
\begin{equation*}
\sum_{A} \ \sum_{B \lesssim \min\{1, A^{-2}\}} AB,
\end{equation*}
the situation is less straightforward. In fact, for the above sum, one needs to split it into two pieces corresponding to the two possible values of the minimum. As such, it follows that 
\begin{equation*}\aligned
\sum_{A} \ \sum_{B \lesssim \min\{1, A^{-2}\}} AB&=\sum_{A\leq 1} \left(\sum_{B \lesssim 1} B\right)A+\sum_{A>1} \left(\sum_{B \lesssim A^{-2}} B\right)A\\
&\sim \sum_{A\leq 1} A+\sum_{A>1} A^{-1}\sim 1.
\endaligned
\end{equation*}
What we want to stress here is that in order to perform the summation in $B$, we had to split the values of $A$ into two complementary sets. 

When dealing with a summation like the one in \eqref{hm-gen}, which is performed over seven variables (i.e., $(N_i)_{1\leq i\leq 3}$, $(L_i)_{1\leq i\leq 3}$, and $H$), with each one being involved in at least one conditional inequality, the process is obviously much more complex. This is why a computer-assisted analysis makes sense in this type of situation. The way in which we conduct the analysis is as follows:
\begin{enumerate}
\item write the full summation as an iterated summation over each present variable;

\item allow first for the variables to vary independently;

\item let the computer perform the summation;

\item in case the summation yields an infinite result, use one or more conditional inequalities to impose  restrictions on the ranges of the variables and repeat the previous step.

\end{enumerate}

To illustrate  the efficacy of this procedure, we take as a case study the low modulation scenario for \eqref{cucv-xst} with $(n,s,\theta)=(2,-1/2,5/8)$. Hence, the variables involved in \eqref{lm-gen} satisfy the conditional inequalities
\begin{align}
\label{N} N_{max}\sim N_{med}\gtrsim 1\gtrsim N_{min},\\
\label{NN}N_{max}\sim N,\\
\label{L} L_{max}\gg L_{med}\geq L_{min}\gtrsim 1,\\
\label{HNL} H\sim N_{max}^2\sim L_{max},
\end{align}
while, according to \eqref{loc+++},
\begin{equation*}
\left\| X_{N_1, N_2, N_3; H; L_1, L_2, L_3}\right\|_{[3,\R^2\times \R]}\lesssim L_{min}^{\frac 12}N_{max}^{-\frac 12}N_{min}^{\frac{1}{2}} \min\{N_{max}N_{min}, L_{med}\}^{\frac 12}.
\end{equation*}
To be able to work with a summand which is as explicit as possible, we make two assumptions. First, we let
\begin{equation}
\min\{N_{max}N_{min}, L_{med}\}=L_{med}.
\label{NNL}\end{equation}
Secondly, by taking into account \eqref{NLst}, we specialize to the more challenging case when $N_{min}=N_3$ and $L_{max}=L_3$. Thus, the summand has the formula
\[
S =\langle N_{min}\rangle^{-\frac12} N_{min}^{\frac12}\langle N_{med}\rangle^{\frac12}\langle N_{max}\rangle^{\frac12} N_{max}^{-\frac12}L_{min}^{-\frac 18}L_{med}^{-\frac 18}L_{max}^{-\frac 38}.
\]

This is the moment when we initiate the procedure described above, for which the first iteration trivially yields that
\[\sum_{N_{max} = 0}^{\infty}\ \sum_{N_{med} = 0}^{\infty}\ \sum_{N_{min} = 0}^{\infty}\ \sum_{L_{max} = 0}^{\infty}\ \sum_{L_{med} = 0}^{\infty} \ \sum_{L_{min} = 0}^{\infty}\ \sum_{H = 0}^{\infty} S=\infty.\]
Next, we implement \eqref{N} and \eqref{L} jointly with $H\sim L_{max}$ to infer that
\[
S\sim  N_{min}^{\frac12} N_{max}^{\frac12}L_{min}^{-\frac 18}L_{med}^{-\frac 18}L_{max}^{-\frac 38}
\]
and write the summation as
\[\sum_{N_{max} = 2}^{\infty}\ \sum_{N_{med} = \frac{N_{max}}{2}}^{N_{max}}\ \sum_{N_{\min} = 0}^{1}\ \sum_{L_{\max} = 8}^{\infty}\ \sum_{L_{med} = 1}^{\frac{L_{max}}{8}}\ \sum_{L_{min} = 1}^{L_{med}}\ \sum_{H = \frac{L_{max}}{2}}^{2L_{\max}} S.\]
However, another iteration of the third step in the procedure still produces an infinite sum. Following this, we use \eqref{HNL} and \eqref{NNL} to argue that  
$N_{max}N_{min}$ is a better upper bound for $L_{med}$ than $L_{max}/8$. Since $L_{med}\geq 1$, this change also brings about $N_{max}^{-1}$ and $N_{max}N_{min}$ as new, improved lower bounds for $N_{min}$ and $L_{max}$. Consequently, the summation takes the form 
\[\sum_{N_{max} = 2}^{\infty}\ \sum_{N_{med} = \frac{N_{max}}{2}}^{N_{max}}\ \sum_{N_{\min} = N_{max}^{-1}}^{1}\ \sum_{L_{\max} = N_{max}N_{min}}^{\infty}\ \sum_{L_{med} = 1}^{N_{max}N_{min}}\ \sum_{L_{min} = 1}^{L_{med}}\ \sum_{H = \frac{L_{max}}{2}}^{2L_{\max}} S.\]
Unfortunately, by running again the computation step, we obtain infinity for an answer. Finally, if we rely on   the unused part of \eqref{HNL} (i.e., $L_{max}\sim N_{max}^2$), we can modify, with better lower and upper bounds, the sums with respect to $L_{max}$ and $H$. Hence, we are dealing with
\[\sum_{N_{max} = 2}^{\infty}\ \sum_{N_{med} = \frac{N_{max}}{2}}^{N_{max}}\ \sum_{N_{\min} = N_{max}^{-1}}^{1}\ \sum_{L_{\max} = \frac{N^2_{max}}{2}}^{2N^2_{max}}\ \sum_{L_{med} = 1}^{N_{max}N_{min}}\ \sum_{L_{min} = 1}^{L_{med}}\ \sum_{H = \frac{N^2_{max}}{4}}^{4N^2_{\max}} S\]
and another iteration of the third step in our procedure yields a result which is both finite and comparable to $1$. It is worth noticing that we did not make use of \eqref{NN} in the process.

As final comments, let us say that our code is easily adapted to cover the summation arguments for the other types of bilinear estimates proved by Tao in \cite{T-01} (e.g., bounds related to the KdV and wave equations). Moreover, we see no reason not to believe that it can accommodate even general multilinear estimates involving dyadic decompositions.  

\bibliographystyle{amsplain}
\bibliography{bousbib-2}

\providecommand{\bysame}{\leavevmode\hbox to3em{\hrulefill}\thinspace}
\providecommand{\MR}{\relax\ifhmode\unskip\space\fi MR }
\providecommand{\MRhref}[2]{%
  \href{http://www.ams.org/mathscinet-getitem?mr=#1}{#2}
}
\providecommand{\href}[2]{#2}
\begin{thebibliography}{10}

\bibitem{BS}
J.~L. Bona and R.~L. Sachs, \emph{Global existence of smooth solutions and
  stability of solitary waves for a generalized {B}oussinesq equation}, Comm.
  Math. Phys. \textbf{118} (1988), no.~1, 15--29.

\bibitem{CDKS-01}
J.~E. Colliander, J.-M. Delort, C.~E. Kenig, and G.~Staffilani, \emph{Bilinear
  estimates and applications to 2{D} {NLS}}, Trans. Amer. Math. Soc.
  \textbf{353} (2001), no.~8, 3307--3325.

\bibitem{FG96}
Y.~F. Fang and M.~G. Grillakis, \emph{Existence and uniqueness for {B}oussinesq
  type equations on a circle}, Comm. Partial Differential Equations \textbf{21}
  (1996), no.~7-8, 1253--1277.

\bibitem{F092}
L.~G. Farah, \emph{Local solutions in {S}obolev spaces and unconditional
  well-posedness for the generalized {B}oussinesq equation}, Commun. Pure Appl.
  Anal. \textbf{8} (2009), no.~5, 1521--1539.

\bibitem{F09}
\bysame, \emph{Local solutions in {S}obolev spaces with negative indices for
  the ``good'' {B}oussinesq equation}, Comm. Partial Differential Equations
  \textbf{34} (2009), no.~1-3, 52--73.

\bibitem{K13}
N.~Kishimoto, \emph{Sharp local well-posedness for the ``good'' {B}oussinesq
  equation}, J. Differential Equations \textbf{254} (2013), no.~6, 2393--2433.

\bibitem{KT10}
N.~Kishimoto and K.~Tsugawa, \emph{Local well-posedness for quadratic nonlinear
  {S}chr\"odinger equations and the ``good'' {B}oussinesq equation},
  Differential Integral Equations \textbf{23} (2010), no.~5-6, 463--493.

\bibitem{L93}
F.~Linares, \emph{Global existence of small solutions for a generalized
  {B}oussinesq equation}, J. Differential Equations \textbf{106} (1993), no.~2,
  257--293.

\bibitem{O17}
M.~Okamoto, \emph{Norm inflation for the generalized {B}oussinesq and
  {K}awahara equations}, Nonlinear Anal. \textbf{157} (2017), 44--61.

\bibitem{S-97}
G.~Staffilani, \emph{Quadratic forms for a {$2$}-{D} semilinear
  {S}chr\"{o}dinger equation}, Duke Math. J. \textbf{86} (1997), no.~1,
  79--107.

\bibitem{T-01}
T.~Tao, \emph{Multilinear weighted convolution of {$L^2$}-functions, and
  applications to nonlinear dispersive equations}, Amer. J. Math. \textbf{123}
  (2001), no.~5, 839--908.

\bibitem{T-06}
\bysame, \emph{Nonlinear dispersive equations}, CBMS Regional Conference Series
  in Mathematics, vol. 106, Published for the Conference Board of the
  Mathematical Sciences, Washington, DC; by the American Mathematical Society,
  Providence, RI, 2006, Local and global analysis.

\end{thebibliography}

\end{document}